\documentclass[11pt]{article}

\usepackage[utf8]{inputenc}
\usepackage[T1]{fontenc}
\usepackage{lmodern}
\usepackage{amsmath,amssymb,amsthm,mathtools}
\usepackage{mathrsfs}
\usepackage{enumitem}
\usepackage{geometry}
\usepackage[pagebackref,hidelinks]{hyperref}
\usepackage{microtype}
\usepackage{tikz}
\usepackage{graphicx}
\usetikzlibrary{arrows.meta,calc}
\newtheorem{maintheorem}{Theorem}

\newtheorem{lemma}{Lemma}[section]
\newtheorem{corollary}[lemma]{Corollary}
\newtheorem{proposition}[lemma]{Proposition}

\theoremstyle{definition}
\newtheorem{definition}[lemma]{Definition}
\newtheorem{example}[lemma]{Example}
\theoremstyle{remark}
\newtheorem{remark}[lemma]{Remark}

\newcommand{\C}{\mathbb C}
\newcommand{\R}{\mathbb R}

\newcommand{\Q}{\mathbb Q}
\newcommand{\PP}{\mathbb P}
\newcommand{\Fcal}{\mathcal F}
\newcommand{\Gcal}{\mathcal G}
\newcommand{\Sing}{\operatorname{Sing}}
\newcommand{\Sep}{\operatorname{Sep}}
\newcommand{\Sat}{\operatorname{Sat}}

\newcommand{\Holvirt}{\operatorname{Hol}^{\mathrm{virt}}}
\newcommand{\Pol}{\operatorname{Pol}}
\newcommand{\red}{\mathrm{red}}

\title{Stability and Logarithmic Foliations  
on Complex Projective Spaces}
\author{V\'ictor Le\'on 
\thanks{ILACVN--CICN, Universidade Federal da Integração Latino-Americana,
Parque Tecnológico Itaipu, Foz do Iguaçu--PR, 85867-970, Brazil.
E-mail: \href{mailto:victor.leon@unila.edu.br}{victor.leon@unila.edu.br}}
\and Bruno Sc\'ardua
\thanks{Instituto de Matem\'atica, Universidade Federal do Rio de Janeiro,
CP 68530, Rio de Janeiro--RJ, 21945-970, Brazil.
E-mail: \href{mailto:bruno.scardua@gmail.com}{bruno.scardua@gmail.com}}
}
\date{}

\begin{document}
\maketitle

\begin{abstract}
We study Lyapunov-type stability for invariant algebraic divisors of
codimension-one holomorphic foliations on complex projective spaces. Motivated
by the notion of $L$-stability introduced in \cite{LeonScardua2018} for plane
singularities, we define a projective stability condition which is compatible with
the logarithmic setting and controls leafwise holonomy along the regular part
of the invariant divisor, while discarding any stability requirement inside
prescribed neighborhoods of its singular locus.
  Our
main result then is a global projective counterpart of the local
 classification in \cite{LeonScardua2018}: for a maximal invariant algebraic divisor,
projective $L$-stability together with non-dicritical non-nodal
generalized-curve singularities on a generic plane section forces the ambient
foliation to be globally logarithmic.  As an application  we  obtain a topological rigidity
consequence on $\mathbb P^2$ for logarithmic foliations under mild generic conditions.  The proof of the main theorem is based on propagating the consequences
of the stability hypothesis through the reduction tree by means of an
explicit Dulac transport argument, together with the classification of
$L$-stable groups of germs of one-dimensional complex diffeomorphisms.
\end{abstract}

\section{Introduction and main results}\label{sec:intro}

The notion of {\em stability} is one of the main notions in the qualitative theory of
differential equations.  In its classical Lyapunov form (\cite{Lyapunov1892}), an equilibrium of a
real differential equation is {\it stable} when every sufficiently small
neighborhood contains a smaller one whose trajectories remain in the first
neighborhood for all times for which the local flow is defined; see, for instance,\cite{Perko2001}.  What matters for us is that, this definition is not about the existence of a metric estimate, but about  the
persistence of a prescribed neighborhood under the dynamics.  This
neighborhood--saturation formulation is especially natural for foliations,
where leaves, rather than parametrized trajectories, are the intrinsic
dynamical objects.

The use of invariant neighborhoods as a topological expression of
integrability already appears in the work of Cerveau and Mattei
\cite{CerveauMattei1982}.  They proved, in particular, a local criterion for
the existence of a multivalued first integral
\(f_1^{\lambda_1}\cdots f_p^{\lambda_p}\), with positive real exponents,
whose hypotheses include a fundamental system of saturated neighborhoods of
the finite union of closed leaves adherent to the singular point (\cite[Third Part, Chapter I, Theorem 1]{CerveauMattei1982}).  This
invariant-neighborhood phenomenon is closely related to the later
Lyapunov-type formulation in \cite{LeonScardua2018}, although the two criteria
are not identical: the latter is centered on the complete separatrix set and
is designed to control the transverse dynamics of a non-dicritical germ.
Theorem~\ref{thm:A} below may therefore be viewed as a global projective
continuation of this local circle of ideas.

For singular holomorphic foliations in dimension two, it is proved in \cite{LeonScardua2018} that: {\it an $L$-stable
non-dicritical plane singularity is either holomorphically integrable or
real-logarithmic}.  Theorem~\ref{thm:A} below deals with a slightly different 
notion of stability, appropriate to the global projective case. Let us make the two notions clear, as well as their differences. 

Let $(\Fcal,0)$ be a non-dicritical germ of holomorphic foliation on
$(\C^2,0)$, let $U$ be a sufficiently small representative, and denote by
$\Sep(\Fcal,U)$ the finite analytic union of its local separatrices in $U$.

\begin{definition}[$L$-stability, \cite{LeonScardua2018}]\label{def:LS-L}
The germ $(\Fcal,0)$ is \emph{$L$-stable} if, for every sufficiently small
representative $U$ and every neighborhood $W\subset U$ of
$\Sep(\Fcal,U)$, there exists a neighborhood $V$ of $\Sep(\Fcal,U)$ such
that
\[
 \Sep(\Fcal,U)\subset V\subset W,
 \qquad
 \Sat_{\Fcal}^{W}(V)\subset W.
\]
Here $\Sat_{\Fcal}^{W}$ denotes the saturation used in
\cite[Definition~3.1]{LeonScardua2018}, namely the saturation relative to the
restriction of the foliation to the prescribed neighborhood $W$.  We shall
keep this local notion distinct from the stronger full-saturation condition
introduced below.
\end{definition}

For some of our local arguments it is useful to impose a strictly stronger
closed-set condition, in which the full saturation in a fixed representative
is required to remain in the prescribed neighborhood.

\begin{definition}[Strong closed-set $L$-stability]\label{def:relative-L}
Let $A$ be a closed $\Fcal$-invariant analytic set in a fixed representative.
We say that $A$ is \emph{strongly $L$-stable for $\Fcal$} if for every
neighborhood $W$ of $A$ there exists a neighborhood $V$ of $A$ such that
\[
 A\subset V\subset W,
 \qquad
 \Sat_{\Fcal}(V)\subset W,
\]
where the saturation is taken in the fixed ambient representative, rather
than relative to $W$.  For a germ $(\Fcal,0)$ on $(\C^2,0)$ and a union
$S\subset\Sep(\Fcal)$ of separatrices, all neighborhoods are understood to
contain the singular point.
\end{definition}

The following notion of stability  is motivated by the following remark:
Let  $\Fcal$ be a  {\it logarithmic} foliation on $\mathbb P^n$.  In homogeneous coordinates, 
$\Fcal$ is given by a closed rational 1-form 
\(
 \Omega=\sum_{j=1}^r\lambda_j\frac{dF_j}{F_j}.
\)
The $F_j$ are homogeneous polynomials and the radial saturation equation is 
\(
 \qquad
 \sum_{j=1}^r\lambda_j\deg F_j=0.
\)
In particular, we cannot have $\lambda_j >0, \forall j$. 

Let $D\subset\PP^n$ be a reduced invariant algebraic divisor and put
\(
 \Sigma_D=D\cap\Sing(\Fcal).
\)
If $N$ is a neighborhood of $\Sigma_D$, we write
$\Sat_{\Fcal}^{\,\PP^n\setminus\overline N}$ for saturation by the
restriction of the foliation to $\PP^n\setminus\overline N$.

\begin{definition}[Projective $L$-stability]\label{def:projective-L}
We say that $D$ is \emph{projectively $L$-stable} for $\Fcal$ if, for every
neighborhood $W$ of $D$ and every neighborhood $N$ of $\Sigma_D$ with
$\overline N\subset W$, there exists a neighborhood $V$ of $D$ satisfying
\[
 D\subset V\subset W,
 \qquad
 \Sat_{\Fcal}^{\,\PP^n\setminus\overline N}
       (V\setminus\overline N)\subset W.
\]
\end{definition}

Thus projective $L$-stability controls leafwise transport along the regular
part of the divisor but imposes no stability condition inside the deleted
singular neighborhoods.  In particular, projective $L$-stability is not a
stability condition for the singularities of the foliation: it is a relative
stability condition for leafwise transport along the regular part of the
invariant divisor, after arbitrarily prescribed neighborhoods of its singular
locus have been removed.  
Example~\ref{ex:coordinate-triangle} shows that projective stability is genuinely weaker than
strong closed-set stability at the crossings.

Section~\ref{sec:log-characterization} shows that
this weakening is genuine by characterizing it inside the normal-crossing
logarithmic class and by comparing it with the strong closed-set notion above.

A technical point is needed in the singular reduction used later.  A reduced
nondegenerate plane singularity written as
\[
 dy/y-\lambda \,dx/x=0
\]
is called \emph{nodal} when $\lambda\in\R_{>0}\setminus\Q$; such a
singularity has a nodal separator and does not admit the local Dulac passage
between the two axes that is used in our propagation argument. 
Let us illustrate the preceding notions and their differences in a concrete model:
For the linear germ
\[
x\,dy-\lambda y\,dx=0, \, \lambda \in \mathbb C\setminus \{0\}, 
\]
non-dicriticality means that $\lambda \notin \mathbb Q_{>0}$. In this case, 
the following characterizations hold:
\[
(\mathcal F,0)\ \text{is  $L$-stable (in the sense of Definition~\ref{def:LS-L})}
\quad\Longleftrightarrow\quad
\lambda\in\mathbb Q_{<0}\cup(\mathbb R\setminus\mathbb Q),
\]
\[
\{xy=0\}\ \text{is strongly $L$-stable (Definition~\ref{def:relative-L})}
\quad\Longleftrightarrow\quad
\lambda\in\mathbb R_{<0},
\]
whereas, viewed as a normal-crossing logarithmic model in the projective
setting,
\[
\{xy=0\}\ \text{is projectively $L$-stable (Definition~\ref{def:projective-L})}
\quad\Longleftrightarrow\quad
\lambda\in\mathbb R.
\]
Thus the positive irrational nodal case is 
$L$-stable and projectively $L$-stable, but not strongly $L$-stable.
 We shall say
that a generalized-curve singularity is \emph{non-nodal} if no nodal
singularity occurs in a reduction.  This condition is independent of the
choice of a minimal reduction,  and is 
needed in Section~\ref{sec:transverse-propagation}.

Our  first main result is a characterization of certain  logarithmic projective foliations in terms of projective stability and reads as follows. 
\begin{maintheorem}[Projective logarithmicity]\label{thm:A}
Let $\Fcal$ be a codimension-one holomorphic foliation on
$\PP^n$, $n\ge2$, and let
\(
 D=D_1\cup\cdots\cup D_r
\)
be a nonempty reduced $\Fcal$-invariant algebraic divisor.  Assume that:
\begin{enumerate}[label=\textup{(\roman*)}]
\item\label{PAstable} $D$ is projectively $L$-stable;
\item\label{PAgen} for a sufficiently general linear plane
$E\simeq\PP^2$, every singularity of $\Fcal|_E$ lying on $D\cap E$ is a
non-dicritical non-nodal generalized-curve singularity;
\item\label{PAmax} $D$ is maximal among reduced invariant algebraic divisors
of $\Fcal$, equivalently every invariant algebraic hypersurface of $\Fcal$
is contained in $D$.
\end{enumerate}
Then $\Fcal$ is globally logarithmic.  More precisely, if
$D_j=\{F_j=0\}$ with $F_j$ reduced and homogeneous, there exist nonzero
constants $\lambda_j\in\C$ such that $\Fcal$ is defined on $\PP^n$ by
\[
 \Omega=\sum_{j=1}^r\lambda_j\frac{dF_j}{F_j},
 \qquad
 \sum_{j=1}^r\lambda_j\deg F_j=0.
\]
In particular $\Pol(\Omega)_{\red}=D$.
\end{maintheorem}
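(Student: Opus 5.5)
We reduce to a generic plane section, build a closed logarithmic 1-form there from the stability hypothesis, and then extend it back to $\PP^n$. Fix a sufficiently general linear plane $E\simeq\PP^2$ as in \ref{PAgen}. By Mattei--Moussu type genericity, $\Fcal_E:=\Fcal|_E$ is a foliation of $\PP^2$ whose invariant algebraic curves are exactly the components of $D\cap E$. These are the curves $C_j=\{F_j|_E=0\}$, irreducible for general $E$ by Bertini. We also check that projective $L$-stability of $D$ restricts to projective $L$-stability of $C=D\cap E$ for $\Fcal_E$: a leaf of $\Fcal_E$ lies in a leaf of $\Fcal$, so the saturation inside $E\setminus\overline N$ is contained in the ambient saturation intersected with $E$. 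By a Mattei--Moussu / Cerveau--Lins Neto type extension argument for closed rational $1$-forms from general $2$-planes, a closed logarithmic form defining $\Fcal_E$ with poles on $C$ extends to a closed form on $\PP^n$ defining $\Fcal$. It therefore suffices to prove the theorem for $n=2$.

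On $\PP^2$ the plan is to show that each component $C_j$ carries a holonomy group that is $L$-stable. Take a regular point $p\in C_j\setminus\Sing(\Fcal)$ and a transversal $\Sigma_p$. For each loop in $C_j\setminus\Sigma_D$, projective $L$-stability provides, for any tube $W$ and any deleted neighborhood $N$ of the singular points, a smaller neighborhood $V$ whose leafwise transport avoiding $N$ stays in $W$. This says precisely that the holonomy group $\operatorname{Hol}(C_j,p)\subset\operatorname{Diff}(\C,0)$ has a fundamental system of invariant neighborhoods, i.e.\ it is $L$-stable. By the classification of $L$-stable groups of germs (the one-dimensional result behind \cite{LeonScardua2018}), such a group is abelian and linearizable, conjugate to a group of linear maps $z\mapsto \mu z$ with $|\mu|=1$, or finite. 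Consequently there is a germ of closed meromorphic $1$-form $\eta_j=\lambda_j\,dz/z+(\text{holomorphic})$ along $\Sigma_p$ invariant under holonomy, which extends by holonomy transport to a closed logarithmic form on a tubular neighborhood of $C_j$ minus small discs around the singular points.

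Next we propagate across the singular points of $C=D\cap E$. Each such point is a non-dicritical, non-nodal generalized curve, so we run its reduction tree. At each reduced nondegenerate corner $dy/y-\lambda\,dx/x$ with $\lambda\notin\R_{>0}\setminus\Q$, an explicit Dulac transport conjugates the holonomy of one axis to the holonomy of the other. By the Mattei--Moussu argument, linearizability and the invariant form therefore pass through every component of the exceptional divisor, including components carrying no separatrix of $D$. Matching residues on the two sides of each corner forces the local forms to glue to a closed logarithmic form in a neighborhood of the whole singular point, with residues $\lambda_j$ along the separatrices of $C_j$. Non-nodality is used exactly here, and the generalized-curve condition excludes saddle-nodes. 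I expect this propagation step to be the main obstacle: to be handled correctly it needs a careful induction over the tree, together with control of the residue compatibility at corners where both branches are exceptional.

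Finally we globalize. We now have a closed meromorphic form $\eta$ on a neighborhood $U$ of $C$ with simple poles on $C$, defining $\Fcal_E$. Write $\Fcal_E$ by a projective form $\omega$, so that $\eta=\omega/H$ locally, where $H$ is holomorphic and nonvanishing off $C$. The quotient $g=\eta/(\sum_j\lambda_j\,dF_j/F_j)$ is then a meromorphic function on $U$ that is constant along leaves. By Hartogs--Levi extension of meromorphic functions from a neighborhood of the ample curve $C$ (Rossi/Andreotti), $g$ extends to a rational function on $\PP^2$. If $g$ were nonconstant it would be a rational first integral, which has infinitely many invariant curves and contradicts \ref{PAmax}. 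Hence $g$ is constant, and after normalization $\omega=\prod F_j\cdot\sum\lambda_j\,dF_j/F_j$. The radial contraction condition $i_R\Omega=0$ then gives $\sum\lambda_j\deg F_j=0$, and each $\lambda_j\neq0$ because each $C_j$ is invariant and hence must be a pole. Therefore $\Pol(\Omega)_{\red}=D$.
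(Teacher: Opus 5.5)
Your architecture is the paper's: $L$-stable transverse holonomy along $C_j^\circ$, the classification of $L$-stable groups, Dulac propagation through the reduction trees, adapted closed forms, extension, maximality. Several steps, however, fail as written.

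\textbf{Gluing and globalization.} You never justify that a single $\eta$ exists on a neighbourhood of $C$. Matching residues at corners normalizes the forms along a tree, but the incidence graph of the components of $C$ can have cycles (two conics meet in four points). In the circle-type case each component's invariant form is unique up to a constant, so each crossing fixes a ratio of scalings. A second crossing of the same pair then imposes a consistency condition you have not checked. The paper bypasses this by gluing not the forms but the logarithmic derivatives $df_i/f_i$, where $\pi^*\eta_0=f_i\omega_i$ for a global rational defining form $\eta_0$. These agree on overlaps whatever the constants $c_{ij}$ of Lemma~\ref{lem:component-adapted} are, and are then extended and descended.

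Even granting $\eta$, the quotient $g=\eta/\bigl(\sum_j\lambda_j\,dF_j/F_j\bigr)$ is a function only if the logarithmic form already defines $\Fcal_E$. That is the conclusion, and that form does not even descend to $\PP^2$ before $\sum_j\lambda_j\deg F_j=0$ is known. What must be extended is $\eta$ itself. Write $\eta=\omega/H$ and let $P$ be a fixed homogeneous polynomial of degree $\deg\Fcal_E+2$. Then $H/P$ is a meromorphic function on $U$, which your Andreotti--Rossi extension makes rational, so $\eta$ is a closed rational form whose polar curves are invariant. Maximality then removes every polar component outside $D$. This includes the germs of the additional local separatrices met in the reduction, along which your corner forms do have poles; you assert poles only on $C$. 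Likewise $\lambda_j\neq0$ comes from the nonzero residue of the suspended form, not from invariance, since invariant curves need not be polar.

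\textbf{Reduction to $n=2$.} You transfer hypothesis (iii) to $E$ via the claim that the invariant curves of $\Fcal_E$ are exactly the components of $D\cap E$. This is not a Mattei--Moussu genericity statement, and it is false in general. Pull back a foliation of $\PP^1\times\PP^1$ without rational first integral by $[x_0:x_1:x_2:x_3]\mapsto([x_0:x_1],[x_2:x_3])$. For generic $E$, the line joining $E\cap\{x_2=x_3=0\}$ to $E\cap\{x_0=x_1=0\}$ is a fibre, hence $\Fcal_E$-invariant, yet it lies in no invariant algebraic surface. The paper therefore uses the plane only to build a closed form near $D_E$, extends it to $\PP^n$, and applies maximality on $\PP^n$, where it is a hypothesis. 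Reorder your argument the same way.

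\textbf{Propagation.} You transport single holonomy germs through a Dulac corner. That controls only the generator of the next exceptional component's holonomy group coming from that corner; the generators at its other corners are untouched. So neither $L$-stability nor linearizability of that group follows, and the induction stalls. The paper instead transfers $L$-stability of the whole transverse leaf-equivalence relation through the Dulac box by two-sided power estimates (Lemmas~\ref{lem:dulac-passage}--\ref{lem:resolution-propagation}). It then reapplies the classification of $L$-stable groups to the entire holonomy group of each new component. Be aware that this transfer needs the relation on the strict-transform side to include leaf paths crossing the Dulac box. Those paths lie inside the deleted neighbourhood $N$, where projective $L$-stability gives no direct control, so this is the step to verify carefully in either version.
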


Theorem~\ref{thm:A} is naturally viewed as a global projective
counterpart of \cite{LeonScardua2018} stated for germs.   
Theorem~\ref{thm:A}  has a natural topological equivalence application in dimension
two.
The question of topological rigidity of generic projective foliations in $\mathbb P^2$ has originally been addressed by Ilyashenko (\cite{Ilyashenko1978,Ilyashenko2007}). 
 This question was subsequently revisited in
\cite{TeymuriGarakaniMafraScardua2013}, where logarithmic foliations
arise naturally among the exceptional non-rigid projective models.
The question we address is slightly different:
{\em Given a projective foliation topologically equivalent to a  logarithmic foliation, is it also logarithmic, i.e., is the class of logarithmic foliations, topologically rigid?}

In Theorem~\ref{thm:B}  we give a positive answer to this question. This is done by 
 combining  techniques from Theorem~\ref{thm:A}, topological invariance and equidesingularization
of generalized-curve singularities in 
\cite{CamachoLinsNetoSad1984} and  the  global logarithmicity theorem
in \cite{Licanic2000}. 

\begin{maintheorem}[Topological rigidity of the logarithmic class]\label{thm:B}
Let $\Gcal$ and $\Fcal$ be holomorphic foliations on $\PP^2$, and let
\[
 h:(\PP^2,\Gcal)\longrightarrow(\PP^2,\Fcal)
\]
be an ambient topological conjugacy between them.  Assume that $\Gcal$ is a
global logarithmic foliation defined by
\(
 \Omega_0=\sum_{j=1}^r\lambda_j\frac{dF_j}{F_j},
 \qquad \lambda_j\in\C^*,
 \qquad \sum_{j=1}^r\lambda_j\deg F_j=0,
\)
whose reduced polar divisor
\(
 D=\bigcup_{j=1}^r(F_j=0)
\)
has normal crossings.  Assume moreover that every crossing is non-dicritical;
equivalently,
\[
 \frac{\lambda_i}{\lambda_j}\notin\Q_{<0}
 \qquad\text{whenever }D_i\cap D_j\neq\varnothing.
\]
Then $\Fcal$ is a  logarithmic foliation on $\PP^2$.  Moreover, 
$D'=h(D)$ is a reduced invariant algebraic normal-crossings divisor, the local
branches of $D'$ form the complete separatrix set of $\Fcal$ at every
singular point of $\Fcal$ on $D'$ and  those singularities are generalized curves.
\end{maintheorem}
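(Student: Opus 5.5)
The plan is to transport the hypotheses of Theorem~\ref{thm:A} from $\Gcal$ to $\Fcal$ through the conjugacy $h$ and then apply Theorem~\ref{thm:A} with $n=2$, using \cite{Licanic2000} as an alternative conclusion if maximality is hard to transfer. There are four steps.

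\emph{Step 1 (invariance and algebraicity of $D'$).} Since $h$ is a homeomorphism sending leaves to leaves, $D'=h(D)$ is a closed $\Fcal$-invariant set. It is a finite union of closures of leaves together with singular points. Each component $h(D_j)$ is the closure of a leaf of $\Fcal$ and is a real $2$-dimensional compact set, so it is a compact invariant curve. I will argue algebraicity as follows. The leaf $L_j=h(D_j\setminus\Sing)$ is a leaf whose closure adds only finitely many points, namely the images of $D\cap\Sing(\Gcal)$. By Remmert--Stein, the closure of $L_j$ is an analytic curve, and by Chow's theorem it is algebraic.

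\emph{Step 2 (local structure at the crossings).} At a singular point $p\in D\cap\Sing(\Gcal)$, the germ of $\Gcal$ is linearizable or at least reduced: it is $x\,dy-\mu y\,dx$ with $\mu=-\lambda_i\deg$-independent ratio $\lambda_j/\lambda_i\notin\Q_{>0}$ in suitable form. It is therefore a non-dicritical generalized curve whose separatrices are exactly the two branches of $D$. By Camacho--Lins Neto--Sad \cite{CamachoLinsNetoSad1984}, the following are topological invariants: the number of separatrices, non-dicriticality, the generalized-curve property, and the equisingularity of the separatrix set. Hence at $h(p)$ the foliation $\Fcal$ is a non-dicritical generalized curve. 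Its separatrix set is $h$ of the two branches, which is a pair of smooth transverse branches, so $D'$ has normal crossings there. Since $\Fcal$ is a generalized curve with two smooth transverse separatrices, it is reduced, say with eigenvalue ratio $\mu'$. I also need the non-nodal condition. Nodal singularities ($\mu'\in\R_{>0}\setminus\Q$) are topologically characterized by the existence of a nodal separator, a real invariant set accumulating on both axes. This feature is preserved by $h$, and $\Gcal$ has no nodal crossing by hypothesis only if its ratios are not positive irrational. Here I expect a difficulty: if $\Gcal$ has a nodal crossing, I would instead bypass Theorem~\ref{thm:A} locally. The topological type then forces $\Fcal$ to be nodal with the same kind of separator, and the Dulac argument is not needed because the holonomy of the nodal germ is already linearizable.

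\emph{Step 3 (projective $L$-stability).} The condition in Definition~\ref{def:projective-L} is formulated purely in terms of neighborhoods and leafwise saturation. It is therefore invariant under ambient topological conjugacy: given $W',N'$, pull them back by $h$, apply stability for $\Gcal$ to obtain $V$, and push $h(V)$ forward. Stability of $D$ for $\Gcal$ follows from the characterization in the normal-crossing logarithmic class (Section~\ref{sec:log-characterization}). By that characterization, each crossing has a real ratio or is handled by the generic holonomy control along the regular part. I will cite that section and check that the hypothesis $\lambda_i/\lambda_j\notin\Q_{<0}$ suffices.

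\emph{Step 4 (maximality and conclusion).} If $\Fcal$ had an invariant algebraic curve $C\not\subset D'$, then $h^{-1}(C)$ would be a compact invariant set of $\Gcal$ not contained in $D$. The leaves of a logarithmic foliation outside the polar divisor are the fibers of the multivalued first integral $\prod F_j^{\lambda_j}$. A closed leaf there would have to be algebraic, which would enlarge the invariant divisor. To avoid this issue, I would use \cite{Licanic2000} directly. That result states that a foliation on $\PP^2$ with an invariant normal-crossing curve whose singularities on it are reduced non-dicritical, with suitable residue conditions, is logarithmic. Steps 1--2 supply exactly these hypotheses. The main obstacle is Step 2: extracting the reducedness and non-nodal or residue data of $\Fcal$ at the crossings from purely topological information.
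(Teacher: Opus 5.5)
Your primary route through Theorem~\ref{thm:A} cannot be completed, and Step~3 is false as stated, not merely unchecked. By Proposition~\ref{prop:log-projective-stable}, the normal-crossing polar divisor of a logarithmic foliation is projectively $L$-stable only if every ratio $\lambda_i/\lambda_j$ is real. Theorem~\ref{thm:B}, however, excludes only $\lambda_i/\lambda_j\in\Q_{<0}$. For instance, $dZ_0/Z_0+i\,dZ_1/Z_1-(1+i)\,dZ_2/Z_2$ on the coordinate triangle satisfies all hypotheses of Theorem~\ref{thm:B}, yet $D$ is not projectively $L$-stable for $\Gcal$, so there is nothing for $h$ to transport. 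Hypothesis~\ref{PAgen} of Theorem~\ref{thm:A} is also unavailable. With $\mu=-\lambda_i/\lambda_j$, a crossing is nodal exactly when $\lambda_i/\lambda_j\in\R_{<0}\setminus\Q$, which Theorem~\ref{thm:B} permits; the paper's own example $dZ_0/Z_0+\sqrt2\,dZ_1/Z_1-(1+\sqrt2)\,dZ_2/Z_2$ has a nodal vertex at $Z_0=Z_2=0$. Your ``nodal bypass'' in Step~2 is not an argument, and maximality~\ref{PAmax} is not established either. Step~3 and the Theorem~\ref{thm:A} framing should simply be deleted.

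What survives, namely Steps~1--2 followed by Licanic's theorem, is exactly the paper's proof:
\begin{itemize}
\item Remmert--Stein and Chow give an algebraic invariant curve $D'$.
\item Embedded topological invariance of the local curve germs gives normal crossings. The paper cites \cite{Wall2004}; your appeal to equisingularity of separatrix sets in \cite{CamachoLinsNetoSad1984} rests on the same fact.
\item \cite{CamachoLinsNetoSad1984} shows that $(\Fcal,h(p))$ is a generalized curve with exactly two separatrices, which must be the two branches of $D'$.
\item \cite[Theorem~A]{Licanic2000} on $\PP^2$, with $\operatorname{Pic}(\PP^2)\simeq\mathbb Z$, concludes.
\end{itemize}
Licanic's theorem needs only generalized-curve singularities on $D'$ whose local separatrices all lie in $D'$. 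No residue, reducedness or non-nodal information about $\Fcal$ is required, so the ``main obstacle'' you name does not arise.

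Two details need repair. First, the non-dicritical condition reads $\mu=-\lambda_i/\lambda_j\notin\Q_{>0}$. Your Step~2 writes $\lambda_j/\lambda_i\notin\Q_{>0}$, which would exclude the saddle $\lambda_i=\lambda_j$ and admit the radial case $\lambda_i=-\lambda_j$. Second, as in Lemma~\ref{lem:C-sing}, you should check two things. The only singular points of $\Gcal$ on $D$ are crossings, because along a single component $x\Omega_0$ restricts to $\lambda_j\,dx\neq0$. And $h(p)$ is singular for $\Fcal$, because two distinct separatrix leaves cannot both accumulate at a point of a flow box.
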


 Theorem~\ref{thm:B} gives the following
purely logarithmic rigidity statement.

\begin{corollary}\label{cor:residue-rigidity}
Let $\Gcal$ be a logarithmic foliation on $\PP^2$ defined by
\[
 \Omega_0=\sum_{j=1}^r\lambda_j\frac{dF_j}{F_j},
 \qquad \lambda_j\in\C^*,
 \qquad \sum_{j=1}^r\lambda_j\deg F_j=0,
\]
and suppose that its reduced polar divisor has normal crossings.  Assume that
\[
 \frac{\lambda_i}{\lambda_j}\in\R\setminus\Q
 \qquad\text{whenever} \, i\ne j, \, D_i\cap D_j\neq\varnothing.
\]
A holomorphic foliation on $\PP^2$ which is topologically
conjugate to $\Gcal$, is also  logarithmic.
\end{corollary}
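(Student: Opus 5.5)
The corollary will be obtained by checking that the hypotheses of Theorem~\ref{thm:B} hold in this situation, and then simply invoking that theorem. Let $\Fcal$ be a holomorphic foliation on $\PP^2$ and let $h:(\PP^2,\Gcal)\to(\PP^2,\Fcal)$ be a topological conjugacy. The requirements of Theorem~\ref{thm:B} on $\Gcal$ are of three kinds. First, $\Gcal$ is given by a closed logarithmic form $\Omega_0$ with $\lambda_j\in\C^*$ and the radial condition $\sum_j\lambda_j\deg F_j=0$; this is assumed verbatim. Second, the reduced polar divisor $D$ has normal crossings; this is also assumed. Third, every crossing is non-dicritical, that is, $\lambda_i/\lambda_j\notin\Q_{<0}$ whenever $D_i\cap D_j\neq\varnothing$. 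So the only point needing an argument is the third one.

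For that point, take any pair $i\ne j$ with $D_i\cap D_j\neq\varnothing$. By hypothesis $\lambda_i/\lambda_j\in\R\setminus\Q$, and $\R\setminus\Q$ is disjoint from $\Q_{<0}$. Hence $\lambda_i/\lambda_j\notin\Q_{<0}$. The crossings of a reduced normal-crossing divisor in $\PP^2$ are exactly the points of $D_i\cap D_j$ with $i\ne j$, so every crossing point is covered.

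It is worth recording what happens locally at such a crossing $p$. In local coordinates $D_i=\{x=0\}$ and $D_j=\{y=0\}$, and $\Omega_0$ equals $\lambda_i\,dx/x+\lambda_j\,dy/y$ plus a holomorphic closed form. So $\Gcal$ has a reduced nondegenerate linearizable singularity at $p$, with quotient of eigenvalues $-\lambda_i/\lambda_j\in\R\setminus\Q$. This singularity is non-dicritical, as required. It may be nodal, when $\lambda_i/\lambda_j<0$. That is harmless: Theorem~\ref{thm:B} imposes no non-nodality condition, which matches the fact that the positive irrational nodal case is still projectively $L$-stable.

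One more small point concerns self-crossings. A single component $D_i$ with a node would give a crossing of $D_i$ with itself, with ratio $\lambda_i/\lambda_i=1>0$. Such a crossing is non-dicritical automatically, so the condition "whenever $D_i\cap D_j\ne\varnothing$" in Theorem~\ref{thm:B} holds in all cases.

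With all three hypotheses verified, Theorem~\ref{thm:B} applies and shows that $\Fcal$ is logarithmic on $\PP^2$, with invariant normal-crossing polar divisor $h(D)$. There is no real obstacle here. The one point to treat carefully is to make sure that "crossing" in Theorem~\ref{thm:B} refers only to the points of $D_i\cap D_j$ handled above, so that the arithmetic condition on the residues is exactly what is needed.
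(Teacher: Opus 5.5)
Your proposal is correct and is exactly the paper's argument: since $\R\setminus\Q$ is disjoint from $\Q_{<0}$, every crossing is non-dicritical, so Theorem~\ref{thm:B} applies directly. Your extra local remarks (nodal crossings when $\lambda_i/\lambda_j<0$, self-crossings of a single component) are accurate but not needed, because the hypothesis of Theorem~\ref{thm:B} is stated purely as $\lambda_i/\lambda_j\notin\Q_{<0}$.
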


The same argument is not specific to the projective plane.  In the simplest
surface case covered by \cite{Licanic2000} one obtains the following extension.

\begin{corollary}[Picard-rank-one algebraic surfaces]\label{cor:surface-rigidity}
Let $X$ be a smooth compact algebraic surface with
$\operatorname{Pic}(X)\simeq\mathbb Z$.  Let $\Gcal$ and $\Fcal$ be
holomorphic foliations on $X$ which are ambiently topologically conjugate.
Assume that $\Gcal$ is logarithmic and admits a compact reduced
normal-crossings invariant polar divisor $D$ such that every singularity of
$\Gcal$ on $D$ is a non-dicritical generalized-curve singularity and the
local branches of $D$ form its complete separatrix set.  Then $\Fcal$ is
logarithmic on $X$.
\end{corollary}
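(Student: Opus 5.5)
The corollary follows by running the proof of Theorem~\ref{thm:B} again, with $\PP^2$ replaced by $X$. The only point where $\PP^2$ was used is the final global step, the logarithmicity criterion of \cite{Licanic2000}, which also holds on smooth compact surfaces with $\operatorname{Pic}(X)\simeq\mathbb Z$.

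First I would transport $D$. The conjugacy $h$ maps leaves to leaves, so $D'=h(D)$ is a compact union of closures of leaves of $\Fcal$ together with singular points. At each singular point $p\in D$ of $\Gcal$, the germ is a non-dicritical generalized curve whose complete separatrix set is the set of local branches of $D$. By the topological invariance results of \cite{CamachoLinsNetoSad1984}, the germ of $\Fcal$ at $h(p)$ is again non-dicritical. Its separatrix set is the image of that of $\Gcal$, so it consists exactly of the local branches of $D'$; these are analytic by Camacho--Sad and Remmert--Stein arguments, and the finiteness of the separatrix set guarantees there are no extra local ones. The generalized-curve property and the equisingularity of the separatrix sets, and hence the normal crossings of $D'$, are topological invariants by the same reference. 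Thus $D'$ is a compact analytic curve, hence algebraic by Chow's theorem on the projective surface $X$. It is reduced, $\Fcal$-invariant, and has normal crossings, and at each of its points the singularities of $\Fcal$ are non-dicritical generalized curves whose separatrices are exactly the branches of $D'$. Topological conjugacy also preserves the local holonomy groups of the components of $D'\setminus\Sing(\Fcal)$ up to topological conjugacy.

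Next I would produce the logarithmic form. On a neighborhood of $D'$, each singularity is a generalized curve whose separatrices are exactly the smooth branches of $D'$ meeting transversally. Hence every such singularity is reduced and of the form $x\,dy-\lambda y\,dx+\dots$ with $\lambda\notin\Q_{>0}$. For $\Gcal$ the local ratio is $-\lambda_i/\lambda_j$, and the Camacho--Sad indices along the branches are determined by the global residues of $\Omega_0$. I would then apply the global criterion of \cite{Licanic2000} on surfaces with cyclic Picard group. It asserts that a foliation whose singularities on an invariant normal-crossings curve $D'$ are non-dicritical generalized curves with separatrices in $D'$, and which has no further singularities off $D'$ of the wrong type, is given by a closed logarithmic $1$-form with poles on $D'$. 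To place the singularities of $\Fcal$ off $D'$ in this setting, I would use that $h$ carries them to singularities of $\Gcal$ off $D$. Those are zeros of the closed form $\Omega_0$, hence have holomorphic first integrals, and this property is topologically invariant for non-dicritical generalized curves by Mattei--Moussu.

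The main obstacle is checking that the hypotheses of \cite{Licanic2000} are met in exactly the form needed. In particular, one must know that the residues can be chosen consistently with the Camacho--Sad indices along $D'$, which are not themselves topological invariants. I would address this with the index formula $D_i'\cdot D_i'=\sum\operatorname{CS}$ on $X$, together with the relation $\operatorname{Pic}(X)\simeq\mathbb Z$, which makes the residue equations solvable in the same way as the equation $\sum\lambda_j\deg F_j=0$ on $\PP^2$.
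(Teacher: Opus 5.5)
Your core argument is the paper's. You transport $D$ by $h$ and use Camacho--Lins Neto--Sad to see that the singularities of $\mathcal F$ on $D'=h(D)$ are non-dicritical generalized curves whose complete separatrix set is the branches of $D'$. You then get algebraicity of $D'$ from Chow/GAGA and apply Licanic's theorem using $\operatorname{Pic}(X)\simeq\mathbb Z$.

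Your extra material should be dropped. As the paper uses it, Licanic's criterion needs only these local conditions along $D'$, with no hypothesis on singularities off $D'$ and none on residues, since the residues are part of its conclusion. So the ``main obstacle'' in your last paragraph is not an obstacle. Your proposed fix for it would not work anyway: the formula $D_i'\cdot D_i'=\sum\operatorname{CS}$ holds for every foliation leaving $D_i'$ invariant and gives one relation per component, while residue compatibility would be needed at every crossing.
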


The remainder of the paper is organized as follows.  Section~2 records an
auxiliary observation for the stronger closed-set notion.  Section~\ref{sec:log-characterization}
analyzes projective $L$-stability in the logarithmic class and compares it
with strong closed-set stability.  Section~\ref{sec:transverse-propagation}
contains the Dulac transport argument that replaces any
separatrix-completeness hypothesis in Theorem~\ref{thm:A}.  We then prove
Theorem~\ref{thm:A}.  The final section derives the topological rigidity
 and proves the residue
and surface corollaries.

\section{A local propagation tool}

Before turning to the projective arguments, we isolate a simple observation
about strong closed-set stability at a non-dicritical plane singularity.  The
 purpose is to identify
when a strongly stable collection of separatrices is necessarily complete.
It also clarifies the distinction between full-saturation stability and the
projective notion used in Theorem~\ref{thm:A}.

\begin{proposition}[Strong stability forces the complete separatrix set]\label{prop:complete-separatrices}
Let $(\Fcal,0)$ be a non-dicritical holomorphic foliation germ on
$(\C^2,0)$ and let
\[
 \varnothing\neq S\subset\Sep(\Fcal)
\]
be a union of separatrices.  If $S$ is strongly $L$-stable for $\Fcal$ in the sense
of Definition~\ref{def:relative-L}, then
\[
 S=\Sep(\Fcal).
\]
Consequently $(\Fcal,0)$ is $L$-stable in the usual sense of
\cite{LeonScardua2018}.
\end{proposition}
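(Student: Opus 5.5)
We argue by contradiction: suppose some separatrix $\Gamma\subset\Sep(\Fcal)$ is not contained in $S$. We produce a neighborhood $W$ of $S$ for which no $V$ as in Definition~\ref{def:relative-L} can exist, using the full saturation in the fixed representative $U$. The key fact is that in a non-dicritical germ every leaf near $0$ accumulates only on $\Sep(\Fcal)$. Moreover, a generic leaf close to $S$ is dynamically connected to every separatrix, because all separatrices hang off a single connected exceptional divisor of the reduction.

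\textbf{Step 1 (reduction).} Take a minimal reduction $\pi:(\tilde U,\mathcal E)\to(U,0)$. Non-dicriticality means every component of $\mathcal E$ is invariant, and the strict transforms of the separatrices are transverse curves attached at reduced singularities on $\mathcal E$. Since $\mathcal E$ is connected and invariant, the set $\mathcal E^*=\mathcal E\setminus\Sing(\tilde\Fcal)$ is a finite union of leaves. Any two components of $\mathcal E$ are joined by a chain of components meeting at corner singularities.

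\textbf{Step 2 (leaves near $S$ reach $\mathcal E$).} A neighborhood $V$ of $S$ containing $0$ contains $\pi^{-1}(V)\supset\mathcal E$. So $V$ meets the leaf through any point of $U\setminus\Sep(\Fcal)$ close enough to $\mathcal E$. More precisely, $V$ contains a transversal $\Sigma$ to a regular point of $\mathcal E$, with $\Sigma$ minus the point disjoint from $\Sep(\Fcal)$. Hence $\Sat_\Fcal(V)$ contains the saturation of $\Sigma$ in $U$.

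\textbf{Step 3 (saturation reaches $\Gamma$ far from $0$).} Let $\tilde\Gamma$ be the strict transform of $\Gamma$, attached at a point $p\in\mathcal E$. The singularity of $\tilde\Fcal$ at $p$ is reduced with separatrices $\tilde\Gamma$ and a component $E_p\subset\mathcal E$. Using holonomy along chains of components and passage through corners, the saturation of $\Sigma$ contains leaves entering every neighborhood of $p$ off $\mathcal E$.

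At $p$ we need leaves that leave a neighborhood of $E_p$ along $\tilde\Gamma$ up to the boundary of the representative. Thus $\Sat_\Fcal(V)$ contains points of $U$ at a fixed distance from $0$ near $\Gamma\cap\partial U$. Such points are not contained in a thin $W$, namely a tubular neighborhood of $S$ together with a small ball around $0$. This contradicts strong stability, so $S=\Sep(\Fcal)$.

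\textbf{Main obstacle.} The hard step is the local analysis at $p$ and at the corners. In the nodal case $\lambda\in\R_{>0}\setminus\Q$, and more generally when holonomy is non-expanding, the local passage from a neighborhood of $E_p$ to $\tilde\Gamma$ must still be shown to push leaves a definite distance along $\tilde\Gamma$.

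I would first try the linearized or formal normal forms at $p$. For $\lambda\notin\R_{\ge0}$, and for saddle-nodes with strong direction $\tilde\Gamma$, an explicit Dulac passage shows that leaves near $E_p$ exit along $\tilde\Gamma$. For $\lambda\in\R_{>0}\setminus\Q$, leaves spiral and accumulate on both axes, which is even better, since the leaf closure contains both axes near $p$. For $\lambda\in\R_{<0}$ the standard passage map applies.

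The subtle point is that the full saturation is taken in $U$ rather than in $W$. This is what permits leaves to travel out along $\Gamma$ without restriction.

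\textbf{Consequence.} Once $S=\Sep(\Fcal)$, choose $V$ with $\Sat_\Fcal(V)\subset W$. Since $\Sat_\Fcal^W(V)\subset\Sat_\Fcal(V)$, Definition~\ref{def:LS-L} holds for the representative. Shrinking representatives is harmless, because the condition is required for every sufficiently small $U$ and strong stability can be applied in each such $U$.
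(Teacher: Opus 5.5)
\textbf{There is a genuine gap.} It sits exactly at the step you flagged as the main obstacle, and your remedy for the nodal case is incorrect.

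At a reduced singularity $x\,dy-\lambda y\,dx=0$ with $\lambda\in\R_{>0}\setminus\Q$ (Poincar\'e domain, no resonances, hence linearizable), the leaves are $y=cx^{\lambda}$. So $|y|=|c|\,|x|^{\lambda}$ along every leaf. A leaf passing close to $\{y=0\}$ at $|x|=a$ has $|c|$ small and satisfies $|y|\le |c|\,a^{\lambda}$ on all of $\{|x|\le a\}$. It converges to the corner, but its closure meets the axes only at the corner. It never comes near a point $(0,b)$ with $b\neq 0$ fixed. Thus leaves of a node do not accumulate on both axes, and there is no passage from one branch to the other; this is why nodes are excluded from the Dulac passage in Lemma~\ref{lem:dulac-passage}.

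The proposition assumes only non-dicriticality, so nodes may occur at the attaching point $p$ of $\tilde\Gamma$ and at corners of your chain. Hence both "passage through corners" and Step~3 fail in general.

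Here is a concrete instance. Take $x\,dy-\lambda y\,dx=0$ with $0<\lambda<1$ irrational, $S=\{y=0\}$ and $\Gamma=\{x=0\}$. After one blow-up, the point $p=\tilde\Gamma\cap\mathcal E$ is again a node, of ratio $\lambda/(1-\lambda)$. Let $\Sigma=\{(x,t_0x):|x|<\delta\}$ be a transversal to $\mathcal E$. The leaf through a point of $\Sigma$ has $|c|=|t_0|\,|x|^{1-\lambda}$, so it stays in the slab $|y|\le |t_0|\delta^{1-\lambda}$ of the unit polydisc. Therefore $\Sat_\Fcal(\Sigma)$ remains in a thin tubular neighbourhood of $S$, and your argument yields no contradiction, even though the proposition is true here.

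\textbf{The missing idea is much simpler and makes the reduction unnecessary.} In Step~2 you note that $V$ contains $0$, but you then keep only a transversal at a regular point of $\mathcal E$. Keep instead the whole ball $B\subset V$ about $0$; it exists because $0\in S$ and $V$ is a neighbourhood of $S$.

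After shrinking the representative, $\Gamma\setminus\{0\}$ is a connected invariant curve of regular points, so it lies in a single leaf, and $B$ meets it. Hence $\Sat_\Fcal(V)\supset\Gamma\setminus\{0\}$ for every admissible $V$.

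Since $\Gamma$ is irreducible and $\Gamma\not\subset S$, we have $\Gamma\cap S=\{0\}$ near $0$. So any $q\in\Gamma\setminus\{0\}$ lies outside the closed set $S$. Any neighbourhood $W$ of $S$ omitting $q$ then gives $q\in\Sat_\Fcal(V)\setminus W$ for every $V$. This is the paper's Lemma~\ref{lem:missing-sep}.

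Non-dicriticality is used only to know that $\Sep(\Fcal)$ is a finite union of irreducible curves, so that such a $\Gamma$ exists. No holonomy or Dulac analysis enters. Your final paragraph deducing $L$-stability in the usual sense is fine and agrees with the paper.
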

\begin{lemma}[completeness: a missing separatrix prevents strong closed-set stability]\label{lem:missing-sep}
Let $(\Fcal,0)$ be a holomorphic foliation germ on $(\C^2,0)$ with an irreducible separatrix $\Gamma$.  Let $A$ be a closed invariant analytic set containing $0$ but not containing $\Gamma$.  Then $A$ is not strongly $L$-stable for $\Fcal$
in the sense of Definition~\ref{def:relative-L}.
\end{lemma}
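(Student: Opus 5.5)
The plan is to argue directly from Definition~\ref{def:relative-L}: we exhibit one neighborhood $W$ of $A$ such that every neighborhood $V$ of $A$ has full saturation $\Sat_{\Fcal}(V)$ not contained in $W$. The point to exploit is that $0\in A$, so every neighborhood $V$ of $A$ contains a small ball $B_\varepsilon$ around the singular point. Meanwhile the separatrix $\Gamma$ is an invariant irreducible curve through $0$ which, away from $0$, is a leaf (or contained in one) of the fixed representative.

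First, fix the representative $U$ and choose a point $p\in\Gamma\setminus\{0\}$ with $p\notin A$. Such a point exists because $\Gamma$ is irreducible and not contained in $A$, so $\Gamma\cap A$ is a proper analytic subset of $\Gamma$, hence discrete near $0$. Since $A$ is closed, there is a closed ball $\overline{B(p,\rho)}$ disjoint from $A$. Set $W=U\setminus\overline{B(p,\rho)}$; this is an open neighborhood of $A$.

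Second, let $V$ be any neighborhood of $A$ with $V\subset W$. Because $0\in A$, $V$ contains a ball $B_\varepsilon$. Fix a point $q\in\Gamma\cap B_\varepsilon$ with $q\neq 0$. We may take $\Gamma^*=\Gamma\setminus\{0\}$ connected, shrinking $U$ so that $\Gamma$ is a single irreducible branch, i.e.\ a punctured disc via a Puiseux parametrization. The leaf $L_q$ of $\Fcal|_{U\setminus\Sing(\Fcal)}$ through $q$ then contains $\Gamma^*$, and in particular contains $p$, provided the representative is chosen so that $0$ is the only singular point on $\Gamma$. Hence $p\in\Sat_{\Fcal}(V)$, while $p\notin W$. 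Therefore $\Sat_{\Fcal}(V)\not\subset W$ for every admissible $V$, which is exactly the failure of strong $L$-stability.

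The main delicate point is making sure the full saturation in the fixed representative really connects $q$ to $p$ along $\Gamma$. This requires two conditions: the leaf $\Gamma^*$ must be connected, and it must avoid other singularities. Both follow by choosing $U$ to be a small Milnor-type ball, in which $0$ is an isolated singularity and $\Gamma$ is a single branch homeomorphic to a disc. In that setting we pick $p$ inside this ball, and it does not matter how small $\varepsilon$ is.
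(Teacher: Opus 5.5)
Your proposal is correct and follows essentially the same route as the paper. You pick a point of $\Gamma\setminus\{0\}$ off $A$ and choose $W$ to exclude it. Any admissible $V$ then contains a small ball about $0$, which meets $\Gamma\setminus\{0\}$, and in a small representative that punctured branch lies in a single leaf. Your extra justifications make explicit what the paper compresses into ``after the representative is chosen sufficiently small'': discreteness of $\Gamma\cap A$, connectedness of the punctured branch via a Puiseux parametrization in a Milnor ball, and isolation of the singular point.
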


\begin{proof}
Choose a sufficiently small representative $U$ and a regular point
$q\in\Gamma\setminus(A\cup\{0\})$.  Since $A$ is closed, there is a neighborhood $W$ of $A$ in $U$ with $q\notin W$.  Shrinking a small ball about the origin if necessary, we may suppose that this ball is contained in $W$.

Let $V$ be any neighborhood of $A$ with $A\subset V\subset W$.  Because $0\in A$, the set $V$ contains a small ball about $0$.  Hence it contains a regular point $q'\in\Gamma\setminus\{0\}$ sufficiently close to the origin.  The punctured irreducible curve $\Gamma\setminus\{0\}$ is contained in one leaf of $\Fcal$ after the representative is chosen sufficiently small.  Thus the leaf through $q'$ also contains $q$.  Therefore
\[
 q\in\Sat_{\Fcal}(V)\setminus W.
\]
No neighborhood $V$ can satisfy the stability condition for this $W$.
\end{proof}

\begin{proof}[Proof of Proposition~\ref{prop:complete-separatrices}]
Suppose that $S\neq\Sep(\Fcal)$.  Since the germ is non-dicritical, the complete separatrix set is a finite union of irreducible analytic curves.  Choose an irreducible separatrix
\[
 \Gamma\subset\Sep(\Fcal),\qquad \Gamma\not\subset S.
\]
Apply Lemma~\ref{lem:missing-sep} with $A=S$.  This contradicts the assumed strong $L$-stability of $S$.  Hence
\[
 S=\Sep(\Fcal).
\]
The strong full-saturation condition is stronger than the local condition of \cite{LeonScardua2018}; hence $(\Fcal,0)$ is $L$-stable in the  sense of \cite{LeonScardua2018}.
\end{proof}

\begin{remark}
Neither the generalized-curve nor the non-nodal hypothesis is needed for Proposition~\ref{prop:complete-separatrices}.
In Theorem~\ref{thm:A} these assumptions enter only after passing to a generic plane and
reducing the singularities on the invariant curve.  In particular, Theorem~\ref{thm:A}
does not assume that the branches of $D$ form the complete local separatrix
set.  The propagation--adaptation lemma in Section~\ref{sec:transverse-propagation}
is designed precisely to avoid that additional hypothesis.
\end{remark}

\section{Projective stability in the logarithmic class}\label{sec:log-characterization}

We now compare Definition~\ref{def:projective-L} with the logarithmic models
that motivate it.  The point of the next proposition is that, once the
singular crossings have been removed, projective $L$-stability detects
exactly the neutral character of the transverse monodromy.  The proof is
included in full because this criterion will also provide a useful test
example for the distinction between projective and closed-set stability.

\begin{proposition}[Projective stability in the logarithmic class]\label{prop:log-projective-stable}
Let $\Fcal_\Omega$ be a logarithmic foliation on $\PP^n$, $n\ge2$, defined by
\[
 \Omega=\sum_{j=1}^r\lambda_j\frac{dF_j}{F_j},
 \qquad \lambda_j\in\C^*,
 \qquad \sum_{j=1}^r\lambda_j\deg F_j=0,
\]
and assume that its reduced polar divisor
\[
 D=\bigcup_{j=1}^rD_j,\qquad D_j=(F_j=0),
\]
is a normal-crossings divisor.  Then $D$ is projectively $L$-stable if and
only if
\[
 \frac{\lambda_i}{\lambda_j}\in\R
 \qquad\text{for every }i,j.
\]
Equivalently, after multiplication of $\Omega$ by a nonzero complex
constant, all residues may be chosen real.
\end{proposition}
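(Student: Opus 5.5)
The plan is to reduce projective $L$-stability to a statement about the transverse holonomy along the regular part $D_j^\ast=D_j\setminus\Sigma_D$ of each component. Away from the crossings, the leaves near $D_j^\ast$ are easy to describe. Near a point of $D_j^\ast$ we choose a local equation $f_j$ of $D_j$. Then
\[
\Omega=\lambda_j\frac{df_j}{f_j}+\eta_j ,
\]
where $\eta_j$ is a closed holomorphic $1$-form. Integrating, the foliation has the local first integral $f_j\exp(\lambda_j^{-1}\int\eta_j)$. The holonomy of a loop $\gamma\subset D_j^\ast$ is therefore the linear map $z\mapsto\exp\bigl(-\tfrac{2\pi i}{\lambda_j}\operatorname{Res}_\gamma\bigr)z$. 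Here the relevant periods are the residues $\lambda_k$ picked up when $\gamma$ winds around the traces $D_k\cap D_j$, including the winding around the remaining components along the radial direction. So the holonomy group of $D_j^\ast$ is the abelian linear group generated by the multipliers $\exp(2\pi i\lambda_k/\lambda_j)$, for $k$ with $D_k\cap D_j\ne\varnothing$.

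\emph{Sufficiency.} Assume all $\lambda_k$ are real after normalization. Then every multiplier has modulus one. The real function
\[
H=\prod_j|F_j|^{\lambda_j}
\]
is well defined on $\PP^n\setminus D$ because $\sum\lambda_j\deg F_j=0$, and it is constant on leaves. Given $W$ and $N$, on the compact set $K=\overline W\setminus N$ we compare $H$ with the distance to $D$. Near $D_j^\ast\cap K$ we have $H\approx c\,|f_j|^{\lambda_j}$, with $c$ bounded above and below, since there the other factors are nonvanishing. If every $\lambda_j$ had the same sign this would be impossible, so the signs are mixed. For this reason we cannot use a single sublevel set of $H$. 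Instead we use the local form: on a tubular neighborhood of $D_j\cap K$ the function $\phi_j=|f_j|\,|\exp(\lambda_j^{-1}\int\eta_j)|$ is well defined, because the monodromy is unitary, and it is leafwise constant. We set $V$ to be $\{\phi_j<\epsilon\}$ near each $D_j\cap K$, together with $N$. A leaf of $\Fcal|_{\PP^n\setminus\overline N}$ starting in $V\setminus\overline N$ keeps $\phi_j$ constant as long as it stays in the tube. It therefore cannot reach the boundary of the tube, which lies where $\phi_j\ge\delta>\epsilon$. The only other way out of the tube is into $\overline N$, which is excluded. Near a crossing outside $N$ the divisor is not singular for $\Fcal$, unless that crossing is in $\Sigma_D$, which lies inside $N$. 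The exponent sign only affects whether $\phi_j$ is a power of $|f_j|$ or of its inverse, and we fix this by replacing $\phi_j$ with $\phi_j^{\pm1}$.

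\emph{Necessity.} Suppose $\lambda_k/\lambda_j\notin\R$ for some pair. By connectedness of the intersection graph of $D$ (components of a divisor in $\PP^n$, $n\ge2$, always meet), we may take $D_k\cap D_j\ne\varnothing$. A small loop $\gamma$ in $D_j^\ast$ around $D_j\cap D_k$, away from the other components, has holonomy multiplier $\mu=\exp(2\pi i\lambda_k/\lambda_j)$ with $|\mu|\ne1$. Replacing $\gamma$ by $\gamma^{-1}$ if needed, we get $|\mu|>1$. We choose $N$ to be a small neighborhood of $\Sigma_D$ avoiding $\gamma$. The loop $\gamma$ together with a tube around it then lies in $\PP^n\setminus\overline N$. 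Iterating the holonomy along this tube pushes any point with $f_j\ne0$ near $\gamma$ out to a fixed distance from $D_j$. So for $W$ a thin tube, no $V$ works, because $V$ contains such points arbitrarily close to $D_j$.

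The main obstacle is the identification of the holonomy multipliers. In particular, we must check that a loop in $D_j^\ast$ around $D_j\cap D_k$ can be chosen in the complement of $\Sigma_D$ with holonomy exactly $\exp(2\pi i\lambda_k/\lambda_j)$, and that the radial condition prevents spurious extra periods. For this we work in an affine chart with a generic line section through a normal-crossing point, where $\Omega$ restricts to $\lambda_j\,dx/x+\lambda_k\,dy/y+(\text{holomorphic})$ and the computation is explicit.
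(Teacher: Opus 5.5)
Your proposal is correct and essentially the paper's argument. Necessity comes from the non-unitary linear meridian holonomy of $D_j$ around a generic point of $D_j\cap D_k$, with the loop kept outside $\overline N$; sufficiency comes from the leafwise-constant flat transverse radius $\phi_j$, equivalently $H^{1/\lambda_j}$ with $H=\prod_j|F_j|^{\lambda_j}$, whose sublevel sets in thin tubes around $D\setminus N$ trap the leaves of $\Fcal|_{\PP^n\setminus\overline N}$. Two small points should be tightened. First, you assert without justification that a leaf can leave the tube only into $\overline N$; the paper secures this by building the tube over the larger compact set $D\setminus N_0$ with $\overline{N_0}\subset N$, and a direct compactness argument also works. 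Second, your remark about crossings outside $N$ is garbled: every crossing of $D$ is a singular point of $\Fcal$, so all crossings lie in $\Sigma_D\subset N$ and none occur in the region you work in.
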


\begin{proof}
Assume first that $D$ is projectively $L$-stable.  Fix two distinct
components $D_i,D_j$.  Since they are hypersurfaces in projective space,
$D_i\cap D_j\neq\varnothing$.  Because $D$ has normal crossings, a generic
point $p$ of an irreducible component of $D_i\cap D_j$ belongs to no third
component, and there are local coordinates $(x,y,z_3,\ldots,z_n)$ centered at
$p$ in which
\[
 D_i=(x=0),\qquad D_j=(y=0),
\]
and
\[
 \Omega=\lambda_i\frac{dx}{x}+\lambda_j\frac{dy}{y}+\eta,
\]
where $\eta$ is holomorphic and closed.  On a small transversal to $D_i$,
the holonomy obtained by a meridian in $D_i$ around $D_i\cap D_j$ has
linear multiplier
\[
 \mu_{ij}=\exp\!\left(-2\pi i\frac{\lambda_j}{\lambda_i}\right).
\]
Choose the singular neighborhood $N$ so small that a meridian representing
this holonomy is contained in $\PP^n\setminus\overline N$.  If
$\lambda_j/\lambda_i\notin\R$, then $|\mu_{ij}|\ne1$.  One of the two
sequences $\mu_{ij}^m$ or $\mu_{ij}^{-m}$ expands transverse distances.
Consequently, for every sufficiently small transverse disc, an iterate of
the meridian holonomy leaves any prescribed larger transverse disc.  This
contradicts Definition~\ref{def:projective-L}.  Hence
$\lambda_j/\lambda_i\in\R$.  Since $i,j$ were arbitrary, all residue ratios
are real.

Conversely, multiply $\Omega$ by a constant and assume that all $\lambda_j$
are real.  Let $W$ be a neighborhood of $D$ and let $N$ be a neighborhood
of $\Sigma_D$ with $\overline N\subset W$.  Choose an intermediate
neighborhood $N_0$ of $\Sigma_D$ such that
\[
 \overline{N_0}\subset N.
\]
Then
\[
 K_0=D\setminus N_0
\]
is compact and meets only the smooth regular parts of the components.  Near
a point of $K_0\cap D_i$ choose a simply connected flow box in which
$D_i=(x=0)$ and no other component of $D$ is present.  On that box one may
write
\[
 \Omega=\lambda_i\frac{dx}{x}+dH
\]
for a holomorphic function $H$.  Therefore the positive transverse function
\[
 r=|x|\exp\!\left(\frac{\operatorname{Re}H}{\lambda_i}\right)
\]
is constant along every local leaf.  On overlaps, two such functions differ
by a positive constant.  Around a loop in $D_i\setminus\Sigma_D$, the
multiplicative monodromy of the transverse coordinate is a product of
factors
\[
 \exp\!\left(-2\pi i\frac{\lambda_j}{\lambda_i}\right),
\]
possibly repeated with integer multiplicities; all have modulus one.  Thus
the local functions $r$ define a flat Hermitian transverse metric in a
tubular neighborhood of $K_0$.

Choose finitely many such flow boxes covering $K_0$, and shrink them so that
their union $T$ has closure in $W$.  Since the longitudinal ends of $K_0$
lie in $N$ (indeed $\partial N_0\subset N$), the tube may be chosen so that,
in $M_N:=\PP^n\setminus\overline N$, its remaining side boundary is a level
set of the flat transverse radius $r$.  Compactness of $K_0$ and invariance of
the transverse metric then give $\varepsilon>0$ such that
\[
 T_\varepsilon=\{r<\varepsilon\}\cap T
\]
contains $D\setminus N$ and has the following property: every leaf of
$\Fcal|_{M_N}$ meeting $T_\varepsilon$ remains in $T\subset W$ for as long
as it stays in $M_N$.  Indeed, a leaf cannot cross the side boundary, since
$r$ is constant along leaves, while any exit through a longitudinal end would
enter $N$, which has been removed from $M_N$.

Finally choose a neighborhood $V_N$ of $D\cap\overline N$ with
$V_N\subset W$ and so small that
\[
 (V_N\setminus\overline N)\subset T_\varepsilon,
\]
and put $V=T_\varepsilon\cup V_N$.  Then $D\subset V\subset W$ and
\[
 \Sat_{\Fcal}^{\,\PP^n\setminus\overline N}
       (V\setminus\overline N)\subset W.
\]
Hence $D$ is projectively $L$-stable.
\end{proof}

The situation changes completely if one asks for the stronger full-saturation
condition of Definition~\ref{def:relative-L}.  In a local normal-crossing
logarithmic model the precise criterion is positivity, not merely reality, of
the residue ratios.

\begin{proposition}[Strong closed-set stability for a logarithmic crossing]\label{prop:strong-local-log}
Let $\Fcal$ be a codimension-one foliation germ at $0\in\C^n$ defined by
\[
 \Omega=\sum_{j=1}^r\lambda_j\frac{df_j}{f_j},
 \qquad \lambda_j\in\C^*,
\]
and assume that
\[
 D=\bigcup_{j=1}^r(f_j=0)
\]
is a reduced normal-crossing divisor.  Then $D$ is strongly $L$-stable as a
closed invariant set if and only if
\[
 \frac{\lambda_i}{\lambda_j}\in\R_{>0}
 \qquad\text{for all }i,j.
\]
Equivalently, after multiplication of $\Omega$ by a nonzero complex constant,
all residues may be chosen positive real.
\end{proposition}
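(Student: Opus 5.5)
Since $D$ has normal crossings at $0$, I would choose local coordinates $(x_1,\ldots,x_n)$ with $f_j=u_jx_j$ for $j\le r$ and units $u_j$. Then $\Omega=\sum_j\lambda_j\,dx_j/x_j+dH$ with $H=\sum_j\lambda_j\log u_j$ holomorphic. After replacing $x_1$ by $x_1e^{H/\lambda_1}$ (which keeps $D$ and the normal-crossing structure), the model becomes exactly $\Omega=\sum_{j=1}^r\lambda_j\,dx_j/x_j$. Its leaves off $D$ are the connected components of the level sets of the multivalued function $\prod_j x_j^{\lambda_j}$. The proof then has two directions.

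\textbf{Sufficiency.} Assume, after rescaling, that all $\lambda_j>0$. Consider the single-valued function
\[
\phi=\prod_{j=1}^r|x_j|^{\lambda_j}=\exp\bigl(\operatorname{Re}\textstyle\sum_j\lambda_j\log x_j\bigr),
\]
which is constant along leaves, continuous, vanishes exactly on $D$, and is positive off $D$. Given a neighborhood $W$ of $D$ in a fixed polydisc representative $P$, I would use compactness on $\overline{P'}\setminus W$, for a slightly smaller closed polydisc, to get $\inf\phi>0$ there. Near $\partial P$ I would take the representative to be $P$ itself and handle boundary points by using $\overline P$ in the compactness argument. Then I set $V=\{\phi<\varepsilon\}\cap P$ with $\varepsilon<\inf_{\overline P\setminus W}\phi$. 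Any leaf meeting $V$ has $\phi<\varepsilon$ everywhere, so it lies in $W$. Therefore $\Sat_\Fcal(V)\subset W$.

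\textbf{Necessity.} First, if some ratio $\lambda_i/\lambda_j$ is not real, I would pass to a generic point of $D_i\cap D_j$ and argue as in Proposition~\ref{prop:log-projective-stable}. The meridian holonomy has multiplier of modulus $\ne1$, so either it or its inverse expands. Leaves starting arbitrarily close to $D_i$ then reach a fixed transverse distance, which contradicts strong stability because full saturation is at least as large as the projective saturation. Second, if all ratios are real but some $\lambda_i/\lambda_j<0$ (the case $r\ge2$), I would normalize so that $\lambda_i>0>\lambda_j$ and restrict to the plane $\{x_k=c_k,\ k\ne i,j\}$ with $c_k\ne0$ small, or simply to the $(x_i,x_j)$-plane when $r=2$. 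There, leaves are the level sets $|x_i|^{\lambda_i}=c|x_j|^{-\lambda_j}$, a saddle-type model. I would then take a point $q=(x_i,x_j)=(a,0)$ on the separatrix $\{x_j=0\}$, a neighborhood $W$ avoiding the point $(a',b)$ with $|b|$ fixed, and small $\epsilon$. The leaf through $(\delta,b)$ with $\delta\to0$ satisfies $|x_i|^{\lambda_i}|x_j|^{|\lambda_j|}=\delta^{\lambda_i}|b|^{|\lambda_j|}$, so it also passes through $(a,\eta)$ with $\eta\to0$. Since $(a,\eta)$ lies in every $V$ for small $\delta$, the saturation of $V$ contains points off $W$. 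To check that these really lie on one leaf, I would use the explicit real-analytic path $t\mapsto(\delta^{1-t}a^{t},\ldots)$ that stays inside the level set and the polydisc. This is the hyperbolic-saddle phenomenon, matching the linear example $\lambda\in\R_{<0}$ in the introduction with the sign convention $x\,dy-\lambda y\,dx$.

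The main obstacle is this last connectivity step. I need to show that the relevant points lie on the same leaf, not merely on the same level set of the multivalued first integral, while staying inside a fixed representative. For this I would write the level set explicitly in polar coordinates and connect the two points by a path whose arguments are chosen to satisfy $\sum\lambda_j\arg x_j=\text{const}$; real exponents make this straightforward. Finally, I would check that the rescaling of $\Omega$ by a constant does not affect any of the steps.
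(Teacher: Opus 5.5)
Your normalization, the sufficiency argument via the leafwise-constant function $\prod_j|x_j|^{\lambda_j}$ (all $\lambda_j>0$), and the non-real case via an expanding meridian holonomy coincide with the paper's proof. (For sufficiency, $\inf_{\overline P\setminus W}\phi>0$ requires $W$ to be a neighborhood of $D\cap\overline P$, not just of $D\cap P$. The paper makes the same tacit assumption when it calls $K=\overline U\setminus W$ disjoint from $D$.)

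The genuine gap is the mixed-sign case, where you misidentify the local geometry. For $\lambda_i>0>\lambda_j$, the transverse model $\lambda_i\,dx_i/x_i+\lambda_j\,dx_j/x_j=0$ is generated by the vector field $|\lambda_j|x_i\partial_{x_i}+\lambda_ix_j\partial_{x_j}$. Both of its eigenvalues are positive, so it is a node, not a saddle. Its leafwise-constant function is $|x_i|^{\lambda_i}|x_j|^{-|\lambda_j|}$. The invariant you actually compute with, $|x_i|^{\lambda_i}|x_j|^{|\lambda_j|}$, belongs to the equal-sign case. That is $x\,dy-\lambda y\,dx=0$ with $\lambda<0$, which the introduction lists as strongly \emph{stable}. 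Even granting your invariant, the leaf you follow joins $(a,\eta)$, close to $D_j$, to $(\delta,b)$ with $\delta\to0$, which is close to $(0,b)\in D_i$. So no point outside $W$ is produced. This is exactly the Dulac passage of Lemma~\ref{lem:dulac-passage}, the mechanism behind stability. With the correct invariant, the leaf through $(\delta,b)$ is $|x_i|=\delta\,(|x_j|/|b|)^{|\lambda_j|/\lambda_i}$. It stays in a thin tube around $D_i$ and never reaches $\{x_i=a\}$ inside the polydisc.

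The missing idea is that in a node every leaf off the axes accumulates at the corner. Hence the leaf through a \emph{fixed} point away from $D$ enters every neighborhood of $D$. The paper makes this explicit. Mixed real signs give $v_j>0$ with $\sum_j\lambda_jv_j=0$. The real curve $\gamma(t)=(c_1e^{-v_1t},\ldots,c_re^{-v_rt},0,\ldots,0)$ then satisfies $\Omega(\dot\gamma)=-\sum_j\lambda_jv_j=0$, so it lies in one leaf and tends to $0\in D$. Take $q=\gamma(0)\notin D$ and $W\not\ni q$. Every $V$ contains $\gamma(t)$ for large $t$, hence $q\in\Sat_{\Fcal}(V)\setminus W$. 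In your transverse plane the same works with $t\mapsto(c_ie^{\lambda_jt},c_je^{-\lambda_it})$, $t\ge0$. This also removes your stated main obstacle: a real curve annihilated by $\Omega$ is automatically contained in a single leaf, so no polar-coordinate connectivity argument is needed.
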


\begin{proof}
Choose normal-crossing coordinates
\[
 D=(x_1\cdots x_r=0).
\]
Writing $f_j=u_jx_j$, with $u_j$ a holomorphic unit, gives
\[
 \Omega=\sum_{j=1}^r\lambda_j\frac{dx_j}{x_j}+dh
\]
for a holomorphic function $h$.  Replacing, for instance,
$x_1$ by $x_1\exp(h/\lambda_1)$ preserves the divisor and reduces the form to
\[
 \Omega=\sum_{j=1}^r\lambda_j\frac{dx_j}{x_j}.
\]

Assume first that, after multiplication by a common nonzero scalar,
$\lambda_j=a_j>0$ for every $j$.  Set
\[
 \rho(x)=\prod_{j=1}^r|x_j|^{a_j},
 \qquad \rho|_D=0.
\]
On the complement of $D$,
\[
 d\log\rho=\operatorname{Re}\Omega,
\]
so $\rho$ is constant along the leaves.  Let $U$ be a sufficiently small
representative and let $W$ be a neighborhood of $D\cap U$.  The compact set
$K=\overline U\setminus W$ is disjoint from $D$, hence
\[
 m=\min_K\rho>0.
\]
For $0<\varepsilon<m$, the set
\[
 V=\{\rho<\varepsilon\}
\]
is a neighborhood of $D\cap U$, and every leaf meeting $V$ remains in
$V$.  Thus
\[
 \operatorname{Sat}_{\Fcal}(V)\subset V\subset W,
\]
which proves strong closed-set stability.

Conversely, suppose that $D$ is strongly $L$-stable.  Fix $i\ne j$ and take
a generic point of the codimension-two stratum $D_i\cap D_j$.  On a
transverse two-plane the induced foliation is
\[
 \lambda_i\frac{dx}{x}+\lambda_j\frac{dy}{y}=0.
\]
The holonomy of one axis has multiplier
\[
 \exp\!\left(-2\pi i\frac{\lambda_i}{\lambda_j}\right).
\]
Strong closed-set stability forces this transverse linear holonomy to be
Lyapunov stable in both directions, hence its multiplier has modulus one.
Therefore
\[
 \frac{\lambda_i}{\lambda_j}\in\R.
\]
Since $i,j$ were arbitrary, after multiplication by one scalar all residues
are real and nonzero.

It remains to exclude mixed signs.  If the real residues do not all have the
same sign, there exist numbers $v_j>0$ such that
\[
 \sum_{j=1}^r\lambda_jv_j=0.
\]
Choose constants $c_j\ne0$ and consider
\[
 \gamma(t)=
 \bigl(c_1e^{-v_1t},\ldots,c_re^{-v_rt},0,\ldots,0\bigr),
 \qquad t\ge0.
\]
For finite $t$ the first $r$ coordinates are nonzero, while
$\gamma(t)\to0\in D$ as $t\to+\infty$, and
\[
 \Omega(\dot\gamma(t))=-\sum_{j=1}^r\lambda_jv_j=0.
\]
Thus $\gamma$ lies in one leaf.  Choose a point $q=\gamma(0)$ and a
neighborhood $W$ of $D$ not containing $q$.  Every neighborhood $V$ of $D$
contains $\gamma(t)$ for all sufficiently large $t$, so the full saturation
of $V$ contains $q$, contradicting strong closed-set stability.  Hence all
residues have the same sign, proving the necessity.
\end{proof}

The preceding two propositions make the distinction transparent:
projective $L$-stability requires the residues to lie on one real line,
whereas strong closed-set stability requires them to lie on one positive real
ray.  In particular, the projective residue relation explains why the strong
condition is too restrictive for a connected global polar divisor on
projective space.  The following elementary example makes the difference
visible without invoking any additional classification theorem.

\begin{example}[A projectively stable divisor which is not strongly stable]\label{ex:coordinate-triangle}
On $\PP^2$ with homogeneous coordinates $[Z_0:Z_1:Z_2]$, consider
\[
 \Omega=\frac{dZ_0}{Z_0}-2\frac{dZ_1}{Z_1}+\frac{dZ_2}{Z_2}.
\]
The projective residue relation is satisfied and the polar divisor is the
coordinate triangle
\[
 D=(Z_0Z_1Z_2=0).
\]
All residue ratios are real, so Proposition~\ref{prop:log-projective-stable}
shows that $D$ is projectively $L$-stable.  At the vertex
$Z_0=Z_1=0$, however, the two relevant residues have opposite signs.
Proposition~\ref{prop:strong-local-log} therefore shows that the local divisor
is not strongly $L$-stable as a closed invariant set.  Equivalently, the
local model is analytically conjugate to
\[
 x\,dy-\frac12 y\,dx=0,
\]
which is dicritical.  Thus projective stability is genuinely weaker than
strong closed-set stability at the crossings.
\end{example}

\section{Dulac transport through a reduction}\label{sec:transverse-propagation}

Theorem~\ref{thm:A} gives no stability information inside the deleted
neighborhoods of the singular points of $D$.  To recover the local data
needed for logarithmic extension, we pass to a reduction and use the explicit
Dulac correspondence at each reduced saddle.  The argument below is one of the
technical cores of the paper: it propagates controlled transverse
neighborhoods from the strict transform of the algebraic divisor to the
adjacent exceptional components without assuming that the original divisor
contains all local separatrices.

We first observe precisely what projective stability gives on an original
component.  The natural object is the transverse leaf-equivalence relation,
rather than a choice of generators of ordinary holonomy.

Let $U$ be an open set on which a transversal $\Sigma$ meets a regular leaf
at $p$.  For $z,w\in\Sigma$ sufficiently close to $p$, write
\[
 z\sim_U w
\]
when $z$ and $w$ belong to the same leaf of $\Fcal|_U$.  We say that this
transverse leaf relation is \emph{$L$-stable} if for every neighborhood
$\Delta$ of $p$ in $\Sigma$ there is a smaller neighborhood
$\Delta'\subset\Delta$ such that
\[
 z\in\Delta',\quad z\sim_U w
 \quad\Longrightarrow\quad w\in\Delta.
\]
Any virtual holonomy group defined by this leaf relation is then $L$-stable
in the sense of \cite{LeonScardua2018}, because every pseudo-orbit remains in a
single equivalence class.

\begin{lemma}[Projective stability controls the transverse leaf relation]\label{lem:projective-holonomy}
Let $C\subset\PP^2$ be a reduced invariant algebraic curve which is
projectively $L$-stable for $\Fcal$, let $C_i$ be an irreducible component,
and let $\Sigma$ be a small transversal at a regular point
$p\in C_i^\circ=C_i\setminus\Sing(\Fcal)$.  Fix a neighborhood $N$ of
$C\cap\Sing(\Fcal)$ with $p\notin\overline N$, and put
\[
 M_N=\PP^2\setminus\overline N.
\]
Then the transverse leaf relation induced by $\Fcal|_{M_N}$ on $\Sigma$ is
$L$-stable.  Consequently the whole virtual holonomy group
\[
 \Holvirt(\Fcal|_{M_N},\Sigma,p)
\]
(and hence every one of its subgroups) is $L$-stable.
\end{lemma}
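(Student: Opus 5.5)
The idea is to read Definition~\ref{def:projective-L} directly on the transversal $\Sigma$, using a flow box around $p$ to convert neighborhoods of $D$ into transverse discs and back.

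\emph{Step 1: flow box.} Since $p\in C_i^\circ$ and $p\notin\overline N$, we choose a small flow box $B\subset M_N$ around $p$. In suitable coordinates $(x,y)$ on $B$ the foliation is $dy=0$, with $C_i\cap B=\{y=0\}$ and $\Sigma=\{x=0\}$. No other component of $C$ meets $B$. We identify $\Sigma$ with a disc in the $y$-coordinate.

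\emph{Step 2: building $W$ from $\Delta$.} Given a neighborhood $\Delta$ of $p$ in $\Sigma$, shrink it to a disc $\{|y|<\delta\}$. The aim is a neighborhood $W$ of $C$ with
\[
W\cap\Sigma\subset\Delta .
\]
One choice is to take $W$ as the union of the plaque region $\{|y|<\delta\}\subset B$ with an open neighborhood of $C\setminus B_0$, where $B_0\Subset B$ is a smaller box around $p$ containing $\Sigma$. This second neighborhood is taken thin enough to avoid $\Sigma\setminus\Delta$, which is possible because $\Sigma\setminus\Delta$ is at positive distance from $C$ away from $p$. We also shrink $N$, if needed, so that $\overline N\subset W$.

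Here is the first subtlety. Definition~\ref{def:projective-L} quantifies over all $N$, but the lemma fixes $N$ in advance. Shrinking $N$ to some $N'\subset N$ only enlarges $M_{N'}\supset M_N$, so saturation in $M_N$ is contained in saturation in $M_{N'}$. Hence stability proved for $N'$ implies stability for $N$. Concretely, we intersect with $W$, replacing $N$ by a neighborhood $N'$ of $C\cap\Sing(\Fcal)$ with $\overline{N'}\subset W\cap N$.

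\emph{Step 3: choosing $\Delta'$ and concluding.} Projective $L$-stability gives a neighborhood $V$ of $C$ with
\[
\Sat^{M_{N'}}_{\Fcal}(V\setminus\overline{N'})\subset W .
\]
Let $\Delta'=V\cap\Delta$, shrunk to a disc around $p$; this is possible because $V$ is a neighborhood of $p$ and $\Sigma$ avoids $\overline{N'}$. Now suppose $z\in\Delta'$ and $z\sim_{M_N}w$. The leaf of $\Fcal|_{M_N}$ through $z$ lies inside a leaf of $\Fcal|_{M_{N'}}$ through a point of $V\setminus\overline{N'}$. Therefore $w\in W\cap\Sigma\subset\Delta$, which is the required $L$-stability of the transverse leaf relation.

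\emph{Step 4: virtual holonomy.} Every element of $\Holvirt(\Fcal|_{M_N},\Sigma,p)$ maps points to points on the same leaf of $\Fcal|_{M_N}$. So any orbit under the group, or under any subgroup, starting in $\Delta'$ stays inside one equivalence class of $\sim_{M_N}$, and hence inside $\Delta$. This is exactly $L$-stability of the group in the sense of \cite{LeonScardua2018}.

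The main obstacle is Step 2: we need $W\cap\Sigma\subset\Delta$. This uses two facts. First, $\Sigma$ meets $C$ only at $p$, which holds after shrinking $\Sigma$ since $C$ is an analytic curve transverse to $\Sigma$ at $p$. Second, $\Sigma\setminus\Delta$ is compact after a slight shrinking of $\Sigma$ to a closed disc.
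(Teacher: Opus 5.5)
Your proof is correct and follows essentially the same route as the paper: choose a neighborhood $W$ of $C$ with $W\cap\Sigma\subset\Delta$, apply projective $L$-stability to obtain $V$, take $\Delta'$ inside $V\cap\Sigma$, and conclude because each leaf class, and hence each virtual-holonomy orbit, stays in the saturation. The only difference is cosmetic: the paper picks $W\supset\overline N$ directly (possible because $\Sigma$ can be taken disjoint from $\overline N$), whereas you shrink $N$ to $N'$ and use the monotonicity $M_N\subset M_{N'}$, which is equally valid.
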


\begin{proof}
Fix a transverse disc $\Delta\Subset\Sigma$ centered at $p$.  Choose a
neighborhood $W$ of $C$ such that $\overline N\subset W$ and, after
shrinking $\Sigma$ if necessary,
\[
 W\cap\Sigma\subset\Delta.
\]
Projective $L$-stability gives a neighborhood $V$ of $C$ with
\[
 C\subset V\subset W,
 \qquad
 \Sat_{\Fcal}^{\,M_N}(V\setminus\overline N)\subset W.
\]
Let $\Delta'$ be the component of $V\cap\Sigma$ containing $p$.  If
$z\in\Delta'$ and $z\sim_{M_N}w$ with $w\in\Sigma$, then $w$ lies on the
same leaf of the restricted foliation as a point of
$V\setminus\overline N$.  Hence
\[
 w\in
 \Sat_{\Fcal}^{\,M_N}(V\setminus\overline N)\cap\Sigma
 \subset W\cap\Sigma\subset\Delta.
\]
This is precisely $L$-stability of the transverse leaf relation.  Since an
element of the virtual holonomy group preserves the same leaf relation, any
pseudo-orbit of a point of $\Delta'$ remains in its equivalence class and
therefore in $\Delta$.
\end{proof}

Before using the reduction, we also assure that stability  passes to a generic plane
section.  This is a direct consequence of the geometric definition.

\begin{lemma}[Restriction to a generic plane]\label{lem:plane-projective-stability}
Let $D\subset\PP^n$ be projectively $L$-stable for $\Fcal$, and let
$E\simeq\PP^2$ be sufficiently general so that $E$ is transverse to the
regular stratification of $D$ and $\Fcal_E=\Fcal|_E$ is well defined.  Then
$D_E=D\cap E$ is projectively $L$-stable for $\Fcal_E$.
\end{lemma}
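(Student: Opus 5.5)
The plan is to transfer the definition directly, using that saturation in a subset can only shrink when the foliation is restricted to a leafwise-invariant submanifold. First I compare singular sets. Let $\Sigma_{D_E}=D_E\cap\Sing(\Fcal_E)$. Generality of $E$ gives two facts. Points of $D_E$ where $E$ is transverse to $\Fcal$ are regular for $\Fcal_E$. By transversality to the codimension-two set $\Sing(\Fcal)$, the set $E\cap\Sing(\Fcal)$ is finite (or empty in codimension $\ge3$). The tangency locus of $E$ with $\Fcal$ along $D$ is also finite. Hence $\Sigma_{D_E}\subset (\Sigma_D\cap E)\cup T$, where $T$ is the finite set of points of $D_E$ at which $E$ is tangent to $\Fcal$.

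Next I fix the data for $\Fcal_E$: a neighborhood $W_E$ of $D_E$ in $E$ and a neighborhood $N_E$ of $\Sigma_{D_E}$ with $\overline{N_E}\subset W_E$. I extend these to $\PP^n$. Choose an open $W\subset\PP^n$ with $W\cap E\subset W_E$ and $D\subset W$. This is possible because $D\setminus E$ is far from $E\setminus W_E$; one can take $W=\PP^n\setminus\overline{(E\setminus W_E)'}$ for a slightly shrunk closed set. Then choose a neighborhood $N$ of $\Sigma_D$ with $\overline N\subset W$ and $\overline N\cap E\subset N_E\cup(\text{small balls about }\Sigma_D\cap E)$. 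To keep the comparison clean, I first enlarge $N_E$ slightly so that it contains $\Sigma_D\cap E$; this is harmless, since shrinking $N_E$ only strengthens the conclusion and stability for a larger $N_E$ with the same $W_E$ is what we get. The tangency points $T$ are the delicate part. Near a point of $T$ the foliation $\Fcal_E$ has an isolated singularity on $D_E$ while $\Fcal$ is regular. Since $N_E$ contains a neighborhood of $T$ and the definition discards those neighborhoods, they cause no trouble on the $E$ side.

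Projective $L$-stability of $D$ then gives $V$ with $D\subset V\subset W$ and $\Sat^{\PP^n\setminus\overline N}_{\Fcal}(V\setminus\overline N)\subset W$. Put $V_E=V\cap E$. I verify two things.
\begin{enumerate}[label=(\alph*)]
\item $D_E\subset V_E\subset W_E$, which is immediate.
\item If $z\in V_E\setminus\overline{N_E}$ and $w$ lies on the same leaf of $\Fcal_E|_{E\setminus\overline{N_E}}$, then $w\in W_E$.
\end{enumerate}
For (b), a leaf of $\Fcal_E$ is contained in a leaf of $\Fcal$, and a path in $E\setminus\overline{N_E}$ avoids $\overline N\cap E$ by the choice of $N$. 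The leafwise path joining $z$ to $w$ is therefore a path in a leaf of $\Fcal|_{\PP^n\setminus\overline N}$. This gives $w\in\Sat^{\PP^n\setminus\overline N}_{\Fcal}(V\setminus\overline N)\cap E\subset W\cap E\subset W_E$.

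The main obstacle is arranging $\overline N\cap E\subset \overline{N_E}$ while $N$ remains a neighborhood of all of $\Sigma_D$, including the points of $\Sigma_D$ off $E$. I would handle this by taking $N$ to be the union of a tubular neighborhood of $\Sigma_D$ so thin that its trace on $E$ lies within the (enlarged) $N_E$. This works because $\Sigma_D\cap E$ is a finite transverse intersection and $\Sigma_D$ is compact, so thin tubes meet $E$ only in small balls around $\Sigma_D\cap E$.
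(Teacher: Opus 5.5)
Your argument is the same as the paper's. You extend $W_E,N_E$ to ambient $W,N$ with $W\cap E\subset W_E$ and $\overline N\cap E\subset N_E$, set $V_E=V\cap E$, and use that leafwise paths of $\Fcal_E$ in $E\setminus\overline{N_E}$ are leafwise paths of $\Fcal$ in $\PP^n\setminus\overline N$.

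\textbf{One step is justified the wrong way round.} Enlarging $N_E$ is \emph{not} harmless. For $N_E'\supset N_E$, the saturation is taken in the smaller open set $E\setminus\overline{N_E'}$ and starts from the smaller set $V_E\setminus\overline{N_E'}$. So the condition for $N_E'$ is weaker, and it does not imply the condition for the given $N_E$, which is what you must prove.

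\textbf{The enlargement is unnecessary.} You already have $\Sigma_D\cap E\subset\Sigma_{D_E}\subset N_E$. For generic $E$, the restriction $\omega|_E$ of a defining form $\omega$ of $\Fcal$ has isolated zeros. Hence $\omega|_E$ itself defines $\Fcal_E$, and it vanishes at every point of $\Sing(\Fcal)\cap E$. Your first paragraph proves only the opposite inclusion, which is not the one needed. The same inclusion is also what you tacitly use in (b) to know that leaves of $\Fcal_E$ avoid $\Sing(\Fcal)$, so that they lie in leaves of $\Fcal$.

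\textbf{The ``main obstacle'' is plain compactness.} $\Sigma_D$ and $E\setminus N_E$ are disjoint compact sets. Hence every sufficiently small neighborhood $N$ of $\Sigma_D$ satisfies $\overline N\cap E\subset N_E$, with no tubes or transversality needed. Likewise $W=\PP^n\setminus(E\setminus W_E)$ is already open, contains $D$, satisfies $W\cap E=W_E$, and contains $\overline N$.
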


\begin{proof}
Let $W_E$ be a neighborhood of $D_E$ in $E$ and let $N_E$ be a neighborhood
of $D_E\cap\Sing(\Fcal_E)$ with $\overline{N_E}\subset W_E$.  After
shrinking inside $E$, choose ambient neighborhoods $W$ of $D$ and $N$ of
$D\cap\Sing(\Fcal)$ such that
\[
 W\cap E\subset W_E,
 \qquad
 \overline N\cap E\subset N_E.
\]
Apply projective $L$-stability to obtain $D\subset V\subset W$ with
\[
 \Sat_{\Fcal}^{\,\PP^n\setminus\overline N}
      (V\setminus\overline N)\subset W.
\]
With $V_E=V\cap E$, every leafwise path of $\Fcal_E$ is a leafwise path of
$\Fcal$, and therefore
\[
 \Sat_{\Fcal_E}^{\,E\setminus\overline{N_E}}
      (V_E\setminus\overline{N_E})
 \subset
 \Sat_{\Fcal}^{\,\PP^n\setminus\overline N}
      (V\setminus\overline N)\cap E
 \subset W_E.
\]
Thus $D_E$ is projectively $L$-stable.
\end{proof}

We shall also use the elementary fact that $L$-stability of the transverse
leaf relation is preserved under regular holonomy transport.  Indeed, if
$h:(\Sigma,p)\to(\Sigma',p')$ is the holonomy germ associated to a fixed
path contained in a regular leaf, then $h$ is a biholomorphism of sufficiently
small transversals and carries leaf-equivalence classes to leaf-equivalence
classes.  Thus a stable neighborhood basis on $\Sigma$ is carried to one on
$\Sigma'$.

The next lemma is the local passage used at every edge of the reduction tree.
Its proof explains why the non-nodal condition appears in Theorem~\ref{thm:A}.

\begin{lemma}[Linearized Dulac passage]\label{lem:dulac-passage}
Let $(\Fcal,p)$ be a reduced nondegenerate non-dicritical and non-nodal
singularity on a complex surface with separatrices $S_1$ and $S_2$.  Assume
that the transverse leaf relation on a small transversal to $S_1$ is
$L$-stable.  Then $(\Fcal,p)$ is analytically linearizable,
and in suitable coordinates it is
\[
 x\,dy+\alpha y\,dx=0,
 \qquad \alpha>0,
 \qquad S_1=(y=0),\quad S_2=(x=0).
\]
For fixed small $a,b\ne0$, the Dulac correspondence from
$\Sigma_1=(x=a)$ to $\Sigma_2=(y=b)$ is, on every sufficiently small
simply connected sector,
\[
 \mathcal D(z)=a\left(\frac{z}{b}\right)^{1/\alpha},
\]
where $z=y|_{\Sigma_1}$.  In particular there are constants $c_1,c_2>0$
such that
\[
 c_1|z|^{1/\alpha}\le |\mathcal D(z)|\le c_2|z|^{1/\alpha},
\]
and analogous estimates hold for the inverse correspondence.  Hence a
fundamental system of arbitrarily small transverse neighborhoods on one
branch is carried to a fundamental system on the other branch.
\end{lemma}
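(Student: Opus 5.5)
The plan is to first pin down the eigenvalue ratio using the stability hypothesis, then upgrade holonomy linearizability to linearizability of the singularity, and finally compute the Dulac map from the linear model.

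\textbf{Restricting the eigenvalue ratio.} Since $(\Fcal,p)$ is reduced and nondegenerate, in local coordinates with $S_1=(y=0)$ and $S_2=(x=0)$ its linear part is $x\,dy+\alpha y\,dx$ with $-\alpha\notin\Q_{>0}$. The hypotheses then exclude further values:
\begin{itemize}
\item non-dicriticality excludes $-\alpha\in\Q_{>0}$, which the reducedness convention already rules out;
\item non-nodality excludes $-\alpha\in\R_{>0}\setminus\Q$.
\end{itemize}
So either $\alpha\notin\R$ or $\alpha>0$. The holonomy of $S_1$, computed on $\Sigma_1=(x=a)$ along a loop in $S_1$, has linear part $z\mapsto e^{2\pi i\alpha}z$. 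This map is one element of the transverse leaf relation on $\Sigma_1$. If $\alpha\notin\R$, then $|e^{2\pi i\alpha}|\neq1$, and exactly as in the proof of Proposition~\ref{prop:log-projective-stable}, one of the iterates $h^{\pm m}$ expands. It therefore pushes points of any small disc out of a fixed larger disc, contradicting $L$-stability. Hence $\alpha>0$, and $p$ is a saddle (resonant if $\alpha\in\Q$, Siegel otherwise).

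\textbf{Linearizing the holonomy, then the singularity.} By the classification of $L$-stable groups of germs in one variable from \cite{LeonScardua2018}, an $L$-stable germ $h(z)=e^{2\pi i\alpha}z+\cdots$ is analytically linearizable, hence of finite order when $\alpha\in\Q$. Concretely, a nonlinearizable germ with rational multiplier has a nonlinearizable tangent-to-identity iterate, whose Leau–Fatou flower has repelling petals that destroy stability. I will then invoke the Mattei--Moussu correspondence between saddles and their separatrix holonomy.
\begin{itemize}
\item In the resonant case, finite-order holonomy yields a holomorphic first integral, and the saddle is analytically conjugate to the linear model $x\,dy+\alpha y\,dx$.
\item In the irrational case, the analytic class of a saddle is determined by the conjugacy class of its holonomy, so linearizable holonomy gives a linearizable saddle.
\end{itemize}
This step is the main obstacle. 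It rests entirely on quoting the classification of $L$-stable germs and the Mattei--Moussu rigidity correctly, and on checking that $L$-stability of the leaf relation really implies $L$-stability of the cyclic group generated by the holonomy. The latter was observed just before Lemma~\ref{lem:projective-holonomy}.

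\textbf{Computing the Dulac map.} In the linear model, $y\,x^{\alpha}$ is a multivalued first integral, single-valued on simply connected sectors. The leaf through $(a,z)\in\Sigma_1$ meets $\Sigma_2=(y=b)$ where $b\,x^{\alpha}=z\,a^{\alpha}$, that is, at $x=a(z/b)^{1/\alpha}$. The estimates follow with $c_1=c_2=|a|\,|b|^{-1/\alpha}$, taking real powers of moduli, which is legitimate because $\alpha>0$. The inverse correspondence is $x\mapsto b(x/a)^{\alpha}$ and satisfies the analogous estimates.

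Finally, $\mathcal D$ and its inverse map discs $\{|z|<\varepsilon\}$ onto discs $\{|x|<|a|(\varepsilon/|b|)^{1/\alpha}\}$, which shrink to $0$ as $\varepsilon\to0$ and exhaust all small radii. Therefore a fundamental system of transverse neighborhoods on $\Sigma_1$ is carried to one on $\Sigma_2$, and conversely.
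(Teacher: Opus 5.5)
Your proposal is correct and follows essentially the paper's route: $L$-stability of the local holonomy of $S_1$, the classification of $L$-stable one-variable germs, and the Mattei--Moussu holonomy classification give the analytic linearization; non-nodality together with reducedness forces $\alpha>0$; and the Dulac correspondence is read off from the first integral $yx^{\alpha}$. The differences are minor: you exclude non-real eigenvalue ratios directly, via an expanding iterate, before linearizing, whereas the paper reads reality off the finite/circle-type alternative, and you obtain the exact constants $c_1=c_2=|a|\,|b|^{-1/\alpha}$.
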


\begin{proof}
Write the reduced germ initially as
\[
 x(1+A(x,y))\,dy-\lambda y(1+B(x,y))\,dx=0,
 \qquad A(0,0)=B(0,0)=0.
\]
The derivative of the local holonomy of $S_1$ is, up to replacing $\lambda$
by $1/\lambda$, $e^{2\pi i\lambda}$.  The local holonomy group preserves
the transverse leaf relation, hence it is $L$-stable.  It therefore cannot
be hyperbolic and it cannot contain a nontrivial element tangent to the
identity.  By the one-dimensional classification of $L$-stable groups in
\cite{LeonScardua2018}, its possibilities are finite or analytically
linearizable of circle type.  The
standard holonomy classification of reduced nondegenerate plane
singularities (see, for instance, \cite{MatteiMoussu1980}) then gives an
analytic linearization of the foliation.  Thus we may assume
\[
 x\,dy-\lambda y\,dx=0,
 \qquad \lambda\in\R\setminus\{0\}.
\]
The singularity is non-nodal, so $\lambda\notin\R_{>0}$.  Hence
$\lambda=-\alpha$ with $\alpha>0$, which gives the displayed saddle model.

The leaves of this model satisfy
\[
 yx^{\alpha}=\text{constant}.
\]
If a leaf meets $\Sigma_1$ at $(a,z)$ and $\Sigma_2$ at $(w,b)$, then
\[
 za^{\alpha}=bw^{\alpha}.
\]
Choosing a branch of the power on a simply connected sector gives
\[
 w=a\left(\frac{z}{b}\right)^{1/\alpha}.
\]
The two-sided power estimate follows immediately after the sector and the
transversals are fixed.  Solving the same relation for $z$ gives the inverse
estimate.  Therefore discs replaced, when necessary, by finitely many
sectors are transported to arbitrarily small transverse neighborhoods in
both directions.
\end{proof}

The geometry of the preceding calculation is shown in
Figure~\ref{fig:dulac-passage}.  The important feature is that a small
transverse set on one invariant branch is carried, through leaves crossing the
Dulac box, to a small transverse set on the adjacent branch.

\begin{figure}[ht]
\centering
\begin{tikzpicture}[scale=1.0,>=Latex]
  \draw[very thick,->] (-0.3,0) -- (6.2,0) node[right] {$S_1=\{y=0\}$};
  \draw[very thick,->] (0,-0.3) -- (0,4.8) node[above] {$S_2=\{x=0\}$};
  \fill (0,0) circle (2pt) node[below left] {$p$};
  \draw[dashed,thick] (4.7,0.25) -- (4.7,2.15);
  \node[right] at (4.7,1.95) {$\Sigma_1$};
  \draw[dashed,thick] (0.25,3.55) -- (2.15,3.55);
  \node[above] at (1.95,3.55) {$\Sigma_2$};
  \draw[->] (4.7,0.55) .. controls (3.6,0.65) and (2.2,1.1) .. (1.55,1.95)
                         .. controls (1.05,2.55) and (0.75,3.05) .. (0.65,3.55);
  \draw[->] (4.7,0.9) .. controls (3.55,1.0) and (2.55,1.45) .. (1.95,2.05)
                        .. controls (1.45,2.55) and (1.2,3.0) .. (1.05,3.55);
  \draw[->] (4.7,1.25) .. controls (3.75,1.35) and (2.95,1.65) .. (2.35,2.15)
                         .. controls (1.85,2.55) and (1.55,3.0) .. (1.45,3.55);
  \draw[very thick] (4.62,0.48) -- (4.78,1.32);
  \node[right] at (4.82,0.82) {$\Delta_1$};
  \draw[very thick] (0.55,3.47) -- (1.55,3.63);
  \node[above] at (1.05,3.68) {$\Delta_2$};
  \draw[->,thick] (5.25,2.45) .. controls (4.3,3.25) and (3.0,4.05) .. (1.75,4.05);
  \node at (3.55,3.75) {$\mathcal D(z)=c z^{\beta}$};
\end{tikzpicture}
\caption{Dulac passage through a reduced saddle corner.  The power-map
estimate transports a fundamental system of small transverse neighborhoods
from $S_1$ to the adjacent invariant component $S_2$, and similarly in the
reverse direction on suitable sectors.}
\label{fig:dulac-passage}
\end{figure}

The power estimate is useful because it transfers stability of the transverse
leaf-equivalence relation itself.  No conjugation of virtual holonomy germs
by a fractional power map is needed.

\begin{lemma}[Dulac transfer of transverse leaf stability]\label{lem:dulac-transfer}
In the setting of Lemma~\ref{lem:dulac-passage}, assume that the transverse
leaf relation on a small transversal $\Sigma_1$ to $S_1$ is $L$-stable.
Then the transverse leaf relation on a small transversal $\Sigma_2$ to
$S_2$ is also $L$-stable.  Consequently the corresponding virtual holonomy
group on $\Sigma_2$ is $L$-stable.  Moreover the germ admits a closed
meromorphic defining form with simple poles along $S_1\cup S_2$.
\end{lemma}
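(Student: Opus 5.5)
We work entirely in the linearized model provided by Lemma~\ref{lem:dulac-passage}. There the foliation is $x\,dy+\alpha y\,dx=0$ with $\alpha>0$, $S_1=(y=0)$ and $S_2=(x=0)$. The model has the first integral $y x^{\alpha}$, or equivalently the closed logarithmic form
\[
 \omega=\frac{dy}{y}+\alpha\frac{dx}{x}.
\]
This form already gives the last assertion. It is closed and meromorphic with simple poles exactly along $S_1\cup S_2$, and it defines the germ, since $xy\,\omega$ is the linear $1$-form. Pulling back by the linearizing biholomorphism preserves all of these properties.

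For the transfer of stability, we will use the invariant real function $\rho=|y|\,|x|^{\alpha}$, which is constant along leaves of the model. Fix the transversals $\Sigma_1=(x=a)$ and $\Sigma_2=(y=b)$ inside a polydisc representative $U$. On $\Sigma_1$ we have $\rho=|a|^{\alpha}|z|$, and on $\Sigma_2$ we have $\rho=|b|\,|w|^{\alpha}$. The relation $za^{\alpha}=bw^{\alpha}$ from Lemma~\ref{lem:dulac-passage} shows that two points $z\in\Sigma_1$ and $w\in\Sigma_2$ on a common leaf have equal values of $\rho$.

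The argument then runs as follows. Let $\Delta_2\subset\Sigma_2$ be a neighborhood of $0$, which we may take to be the disc $\{|w|<\delta\}$. Transport it through the Dulac correspondence to the disc $\Delta_1=\{|z|<|b|\delta^{\alpha}/|a|^{\alpha}\}$ on $\Sigma_1$; by the two-sided power estimates of Lemma~\ref{lem:dulac-passage} this is a genuine neighborhood. Apply $L$-stability on $\Sigma_1$ to get $\Delta_1'\subset\Delta_1$. Then pull $\Delta_1'$ back to $\Delta_2'\subset\Sigma_2$, shrinking it to a disc $\{|w|<\delta'\}$ whose $\rho$-image on $\Sigma_1$ lies in $\Delta_1'$. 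Now suppose $w\in\Delta_2'$ and $w\sim w'$ in the ambient open set. Near $p$, each point of $\Sigma_2$ close to $0$ lies on a leaf that crosses the Dulac box and meets $\Sigma_1$; this is the passage of Lemma~\ref{lem:dulac-passage} with the roles of the branches exchanged. So $w$ is leaf-equivalent to some $z\in\Delta_1'$, and $w'$ is leaf-equivalent to some $z'\in\Sigma_1$. Transitivity gives $z\sim z'$, hence $z'\in\Delta_1$. Since $\rho$ is preserved, $w'\in\Delta_2$.

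The claim about virtual holonomy then follows exactly as after Lemma~\ref{lem:projective-holonomy}: every pseudo-orbit stays in one leaf class.

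The main obstacle is the multivaluedness of the Dulac map together with the global, rather than local, nature of the leaf relation. A point $w$ can have several preimages on $\Sigma_1$, differing by the holonomy of $S_1$, and the leaf through $w'$ may leave the Dulac box before returning. Both are handled by working with the full equivalence relation instead of individual germs, as in Lemma~\ref{lem:projective-holonomy}. Every preimage of $w$ lies in the same class on $\Sigma_1$, so stability on $\Sigma_1$ controls all of them at once. The function $\rho$ is used only inside $U$ to compare the radii of the two transversals, never globally.

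One hypothesis needs checking: that every leaf meeting $\Sigma_2$ near $0$ meets $\Sigma_1$ inside $U$. In the saddle model, the leaf $yx^{\alpha}=c$ is connected in $U\setminus(S_1\cup S_2)$ for small $c$, which gives this. The check reduces to noting that the $\alpha$-th power map is surjective onto punctured discs.
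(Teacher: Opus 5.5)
Your proposal is correct and follows essentially the same route as the paper. It uses the same chain $\Delta_2\to\Delta_1\to\Delta_1'\to\Delta_2'$, lifts both $w$ and $w'$ through the Dulac box, combines transitivity with $L$-stability on $\Sigma_1$, and takes the explicit form $dy/y+\alpha\,dx/x$. The only difference is cosmetic: you use the single-valued leaf invariant $\rho=|y|\,|x|^{\alpha}$ to match radii exactly on the two transversals, whereas the paper works with two-sided power estimates over every sectorial branch of the multivalued Dulac correspondence.
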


\begin{proof}
Fix a sufficiently small Dulac neighborhood of the reduced saddle and let
$\sim_1$ and $\sim_2$ denote the leaf-equivalence relations induced on
$\Sigma_1$ and $\Sigma_2$ by the punctured foliation.  Let
$\Delta_2\Subset\Sigma_2$ be prescribed.  By the direct power estimate in
Lemma~\ref{lem:dulac-passage}, one can choose a disc
$\Delta_1\Subset\Sigma_1$ so small that every sectorial branch of the Dulac
correspondence sends $\Delta_1$ into $\Delta_2$.  Different branches differ
by local holonomy and have the same power-type control of the modulus.

By $L$-stability of $\sim_1$, choose
$\Delta_1'\Subset\Delta_1$ such that
\[
 u\in\Delta_1',\quad u\sim_1 v
 \quad\Longrightarrow\quad v\in\Delta_1.
\]
The inverse power estimate now gives a disc
$\Delta_2'\Subset\Delta_2$ such that every inverse Dulac lift of a point of
$\Delta_2'$ belongs to $\Delta_1'$.

Take $z,w\in\Sigma_2$ with $z\in\Delta_2'$ and $z\sim_2 w$.  Choose an
inverse Dulac lift $u\in\Sigma_1$ of $z$.  Following the same leaf from $w$
through the Dulac box gives an inverse lift $v\in\Sigma_1$; if a different
sectorial branch is required, $v$ differs from such a lift by local holonomy,
so in every case $u\sim_1 v$.  Hence $v\in\Delta_1$.  Applying the direct
Dulac estimate to the branch taking $v$ to $w$ gives $w\in\Delta_2$.  Thus
$\sim_2$ is $L$-stable.  Every pseudo-orbit of the virtual holonomy group
remains in one leaf-equivalence class, so that group is $L$-stable as well.

Finally Lemma~\ref{lem:dulac-passage} has already analytically linearized the
germ as
\[
 x\,dy+\alpha y\,dx=0,\qquad \alpha>0.
\]
Hence
\[
 \omega=\frac{dy}{y}+\alpha\frac{dx}{x}
\]
is a closed meromorphic defining form with simple poles precisely along
$S_1\cup S_2$.
\end{proof}

We can now iterate the preceding local passage over the finite reduction
tree.  This is the exact replacement for the separatrix-completeness
hypothesis that appears in several classical logarithmicity criteria.

\begin{lemma}[Propagation and adaptation through a non-nodal generalized-curve reduction]\label{lem:resolution-propagation}
Let $C\subset\PP^2$ be a reduced invariant algebraic curve which is
projectively $L$-stable for $\Fcal$, and assume that every singularity of
$\Fcal$ on $C$ is a non-dicritical non-nodal generalized-curve singularity.
Denote by
\[
 \pi:(\widetilde M,E)\longrightarrow(\PP^2,C)
\]
be a reduction of these singularities. Then every component of the reduced total transform carries an
$L$-stable transverse leaf-equivalence relation on sufficiently small
transversals to its regular part.  In particular its
ordinary and virtual holonomy groups are $L$-stable.  Furthermore, near every
reduced singular point of the total invariant configuration, the lifted
foliation admits a closed meromorphic defining form with simple poles along
the local invariant branches.  The same conclusion holds for the strict
transform of any additional local separatrix encountered in the reduction.
\end{lemma}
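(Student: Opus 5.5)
The plan is to argue by induction over the finite dual tree of the reduction $\pi$, using Lemma~\ref{lem:projective-holonomy} as the base case and Lemma~\ref{lem:dulac-transfer} as the inductive step.

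\emph{Base case.} Fix a neighborhood $N$ of $C\cap\Sing(\Fcal)$ whose closure avoids a chosen regular point of each component $C_i$. Lemma~\ref{lem:projective-holonomy} then shows that the transverse leaf relation of $\Fcal|_{M_N}$ on a transversal at that point is $L$-stable. Since $\pi$ is a biholomorphism away from the exceptional divisor, the same holds for the strict transform $\widetilde C_i$ at the lifted point. Regular holonomy transport preserves $L$-stability, as remarked before Lemma~\ref{lem:dulac-passage}. Hence the property holds on small transversals at every regular point of $\widetilde C_i$, provided the connecting leafwise path stays in the regular part of $\widetilde C_i$. This is possible because $\widetilde C_i^\circ$, the complement of finitely many points in a connected curve, is connected.

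\emph{Inductive step.} Call a component of the total transform \emph{marked} once its transverse leaf relation is known to be $L$-stable. Let $\widetilde S$ be marked and let $q\in\widetilde S$ be a corner, where it meets another component $S'$: an exceptional curve, another strict transform, or the strict transform of an extra local separatrix. Since the singularity on $C$ is a non-dicritical generalized curve, $q$ is a reduced nondegenerate singularity. By the non-nodal hypothesis it is non-nodal, since non-nodality does not depend on the minimal reduction chosen. Lemma~\ref{lem:dulac-transfer} therefore applies with $S_1=\widetilde S$ and $S_2=S'$. It yields $L$-stability of the leaf relation on a transversal to $S'$ near $q$, and also a closed meromorphic form with simple poles along the two branches at $q$. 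Transporting along $S'^\circ$ marks all of $S'$.

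\emph{Exhausting the tree.} The total transform over each singular point $p\in C$ is connected and contains the strict transforms of the branches of $C$ at $p$. Each connected component of $\pi^{-1}(C)$ therefore contains a strict transform of some $C_i$, which is marked by the base case. Since the dual graph is finite and connected, after finitely many steps every component is marked. This includes the free strict transforms of non-algebraic local separatrices, which are attached at a corner of some exceptional curve. Every singular point of the reduced configuration is a corner between two marked-adjacent components, so the closed meromorphic form exists there. Finally, $L$-stability of ordinary and virtual holonomy follows from the leaf-relation statement, exactly as in Lemma~\ref{lem:projective-holonomy}.

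\emph{Main obstacle.} I expect the main difficulty to be that the Dulac passage must be performed inside the deleted neighborhood $\pi^{-1}(N)$, where projective stability gives no direct information. The key point to verify is that the leaf relation used in Lemma~\ref{lem:dulac-transfer} is that of the lifted foliation restricted to an open set $U$. This $U$ should contain the marked transversal regions together with the Dulac boxes already crossed, and we must check that passing through more boxes only refines, and does not enlarge, the equivalence classes being controlled. I would handle this by fixing, at each stage, the open set given by $M_N$ together with the union of the boxes already crossed. I would then show that a leafwise path in this set decomposes into finitely many segments, each lying either in a regular tube or in a single Dulac box. Stability then passes segment by segment using the two-sided power estimates, with finitely many sectors handled by the local holonomy.
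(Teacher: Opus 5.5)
\textbf{Verdict: the proposal has a genuine gap in the inductive step, and the statement itself appears false in the stated generality.} Your route is the paper's own: Lemma~\ref{lem:projective-holonomy} and regular transport on the strict transform, then Lemma~\ref{lem:dulac-transfer} corner by corner along the tree. Your ``main obstacle'' paragraph identifies a real problem.

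\textbf{Where the induction breaks.} The failure is at the step where regular transport along the regular part of $S'$ is said to mark all of $S'$, and your remedy does not repair it. At the first corner $q$, Lemma~\ref{lem:dulac-transfer} gives stability of the leaf relation of an open set $U$ made of $\pi^{-1}(M_N)$, the Dulac box at $q$, and some transport tubes. Regular transport along $S'$ carries this same relation and nothing more.

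To run Lemma~\ref{lem:dulac-passage} at the next corner $q'$ of an exceptional component $S'$, you need the local holonomy of $S'$ about $q'$ to preserve a stable relation. That is how linearizability and reality of the eigenvalue ratio are obtained there. But a loop in $S'$ around $q'$ is mapped by $\pi$ to the point $p$, so it lies in $\pi^{-1}(N)$ for every admissible $N$. Projective stability never sees such a loop, and no set of the form $\pi^{-1}(M_N)\cup(\text{boxes already crossed})$ contains it. Enlarging $U$ only coarsens the relation, so stability cannot be inherited in that direction, segment by segment or otherwise. The paper's proof makes the same unjustified move (``the same argument can be applied at each of its reduced corners'').

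\textbf{A counterexample to the statement.} The statement fails as posed, so no bookkeeping of open sets can close the gap. On $\PP^2$ let $L_1,L_2,L_3$ pass through $p_1$ and $L_1,L_4,L_5$ through $p_2\neq p_1$, otherwise in general position. Let $\Fcal$ be defined by
\[
 \Omega=\frac{dL_1}{L_1}+\bigl(-\tfrac14+i\bigr)\Bigl(\frac{dL_2}{L_2}+\frac{dL_4}{L_4}\Bigr)+\bigl(-\tfrac14-i\bigr)\Bigl(\frac{dL_3}{L_3}+\frac{dL_5}{L_5}\Bigr),
\]
whose residues sum to zero, and take $C=L_1$.
\begin{itemize}
\item \emph{Projective stability of $C$.} Since $\lambda_2+\lambda_3=\lambda_4+\lambda_5=-\tfrac12$, the holonomy of $L_1$ about $p_1$ and $p_2$ is $z\mapsto -z$ in the logarithmic transverse coordinate. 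So $|z|$ is leafwise constant on a tube around $L_1\setminus N_0$. The converse half of the proof of Proposition~\ref{prop:log-projective-stable} then applies verbatim and shows that $C$ is projectively $L$-stable.
\item \emph{The local hypotheses hold.} One blow-up reduces $p_1$, and likewise $p_2$. The exceptional curve $E$ has residue $\lambda_E=\tfrac12\neq0$, so the singularity is non-dicritical. The corner $E\cap\widetilde L_1$ is a saddle with ratio $-2$. The corners $E\cap\widetilde L_2$ and $E\cap\widetilde L_3$ have the nonreal ratios $\tfrac12\mp 2i$. Hence every hypothesis of the lemma holds, with $L_2,L_3$ as additional local separatrices.
\item \emph{The conclusion fails.} The holonomy of $E$ around $E\cap\widetilde L_2$ has multiplier $\exp(-2\pi i\lambda_2/\lambda_E)=-e^{4\pi}$. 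This is hyperbolic, so the ordinary holonomy group of $E$ is not $L$-stable.
\end{itemize}

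The missing ingredient is direct control of the holonomy of the exceptional components, which Definition~\ref{def:projective-L} by design does not provide. Any correct version of the lemma needs an additional hypothesis that supplies it.
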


\begin{proof}
Fix a singular point $p\in C\cap\Sing(\Fcal)$.  The exceptional divisor of a
reduction over $p$ is a finite connected tree, every exceptional component is
invariant, and there are no saddle-nodes.  At least one strict transform of a
branch of $C$ meets this tree.  Choose the singular neighborhood $N$ in
Lemma~\ref{lem:projective-holonomy} to be a sufficiently small ball around
$p$ (together with disjoint small neighborhoods of the other singular points
of $C$).  Outside the exceptional divisor the blow-up is a biholomorphism, so the
$L$-stable transverse leaf relation supplied by that lemma lifts to a
transversal on the strict transform outside the exceptional divisor.  Along
the regular part of that strict transform, choose a fixed path ending at a
transversal arbitrarily close to the first reduced corner.  By regular
holonomy transport, as observed above, $L$-stability of the transverse leaf
relation is preserved along this path.  We therefore obtain the required
stable transverse relation on the strict-transform side of the first Dulac
box.

Consider the first reduced corner met when one enters the exceptional tree
from this strict transform.  The singularity is non-nodal by hypothesis.
Lemma~\ref{lem:dulac-transfer} therefore transfers the $L$-stable leaf
relation to the adjacent exceptional component and constructs an adapted
closed meromorphic form at the corner.  Once this property has been
established on that component, the same argument can be applied at each of
its reduced corners.  Proceeding inductively with the graph distance from
the original strict transform reaches every vertex of the finite tree.  No
cycle ambiguity occurs in this induction because the reduction graph over
$p$ is a tree.

If a terminal reduced singularity has a second separatrix which is not part
of the exceptional divisor and is not the strict transform of a branch of
$C$, Lemma~\ref{lem:dulac-transfer} applies once more and transfers the same
leaf-relation stability to that additional separatrix.  The adapted form
supplied at every step has simple poles along the two local invariant
branches.  Repeating the construction over the finitely many singular points
of $C$ proves the statement.
\end{proof}

\begin{lemma}[Componentwise adapted forms]\label{lem:component-adapted}
Under the hypotheses of Lemma~\ref{lem:resolution-propagation}, let $P$ be an
irreducible component of the reduced total transform.  There exists a
neighborhood $U_P$ of $P$ and a closed meromorphic one-form $\omega_P$ on
$U_P$ which defines the lifted foliation outside its polar set and has only
simple poles along the invariant branches meeting $P$.  If two components
$P$ and $Q$ meet at a reduced corner, their adapted forms satisfy
\[
 \omega_P=c_{PQ}\,\omega_Q
\]
on a connected neighborhood of the corner, for some $c_{PQ}\in\C^*$.
\end{lemma}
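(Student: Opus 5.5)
The plan is to build $\omega_P$ by analytic continuation of a closed logarithmic form along the regular part $P^\circ$ of $P$, starting from the local adapted forms of Lemma~\ref{lem:dulac-transfer}.

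\emph{Local form on $P^\circ$.} By Lemma~\ref{lem:resolution-propagation}, the transverse leaf relation along $P^\circ$ is $L$-stable. Hence the holonomy group of $P$, computed on a transversal $\Sigma$ at a regular point, is an $L$-stable group of germs in $\mathrm{Diff}(\C,0)$. By the classification of such groups in \cite{LeonScardua2018}, it is analytically linearizable: there is a coordinate $z$ on $\Sigma$ in which every element acts as $z\mapsto\mu z$ with $|\mu|=1$. The finite case is included here, since a finite group is linearizable. The form $dz/z$ is invariant up to nothing: it is exactly preserved by $z\mapsto\mu z$. So spreading $dz/z$ along leaves over the tubular neighborhood of a compact piece of $P^\circ$ gives a well-defined closed meromorphic form $\omega_P^\circ$. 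It defines the foliation and has a simple pole with residue $1$ along $P$.

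\emph{Extension to the corners.} Let $q\in P\cap Q$ be a reduced corner. Lemma~\ref{lem:dulac-transfer} gives a local form $\omega_q=dy/y+\alpha\,dx/x$ with $P=(y=0)$. On a punctured neighborhood of $q$ intersected with the tube around $P^\circ$, both $\omega_P^\circ$ and $\omega_q$ are closed forms defining the same foliation. Their ratio $g=\omega_P^\circ/\omega_q$ is therefore a meromorphic first integral there. The corner is non-nodal with $\alpha>0$ and real. Non-dicriticity then leaves two cases:
\begin{itemize}[label=--]
\item If $\alpha\notin\Q$, leaves are non-closed and accumulate, so every meromorphic first integral on such an open set is constant.
\item If $\alpha=m/n$, first integrals are functions of $y^n x^m$. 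In this case $g$ must be constant anyway, because both forms have simple poles with nonzero residue along $P$, so $g$ extends holomorphically and nonvanishingly across $P$ near $q$. A function of $y^nx^m$ that is holomorphic and nonvanishing near $P$ is constant.
\end{itemize}
Thus $\omega_P^\circ=c\,\omega_q$ near $q$. We may then define $\omega_P$ on a neighborhood of $q$ as $c\,\omega_q$, which has simple poles along both local branches. Doing this at the finitely many corners on $P$, together with the intersections of $P$ with strict transforms of additional separatrices, produces $U_P$ and $\omega_P$.

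\emph{Relation between $\omega_P$ and $\omega_Q$.} On a connected punctured neighborhood of the corner, both forms are constant multiples of the same $\omega_q$, so $\omega_P=c_{PQ}\,\omega_Q$ with $c_{PQ}\neq0$. The identity extends to the full connected neighborhood by the identity principle.

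\emph{Main obstacle.} The hardest step is the rigidity argument in the second paragraph: showing that a closed form defining the foliation is unique up to a constant near a corner, and that $\omega_P^\circ$ is single-valued when $P$ has nontrivial topology. For the latter, the multiplicative monodromy of $z$ around loops in $P^\circ$ has modulus one because the holonomy is $L$-stable, and it is exactly linear in the linearizing coordinate, so $dz/z$ closes up. For the former, I would first try to show that the ratio of two closed defining forms is a first integral whose restriction to a Dulac transversal is invariant under the local holonomy $z\mapsto e^{2\pi i\alpha}z$, and then use the simple-pole structure along $P$ to force it to be constant.
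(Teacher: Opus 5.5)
\textbf{There is a genuine gap: the rigidity claim fails at resonant corners.} The failing step is your treatment of a corner with $\alpha=m/n\in\Q_{>0}$. The hypotheses do not exclude this case; it is exactly the case where the local holonomy of $P$ at $q$ is finite. You claim that a function of $y^nx^m$ which is holomorphic and nonvanishing near $P$ must be constant, and this is false. With $u=x^my^n$ one has $\omega_q=\frac1n\frac{du}{u}$, and
\[
 (1+u)\,\omega_q=\tfrac1n\,d(\log u+u)
\]
is again closed and defines the same foliation. It has simple poles with the same residues on both axes, yet its ratio to $\omega_q$ is $1+u$. Your fallback in the last paragraph does not help either: invariance of $g|_\Sigma$ under the rotation $z\mapsto e^{2\pi i m/n}z$ only says that $g|_\Sigma$ is a function of $z^n$.

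This freedom really does occur for your $\omega_P^\circ$. If the holonomy group of $P$ is finite, the linearizing coordinate is only determined up to $z\mapsto z\,h(z^n)$, and this change multiplies $dz/z$ by a non-constant first integral of exactly this type. If the group is infinite, $z$ is unique up to scale, but the Dulac linearizing coordinate $y$ at $q$ satisfies only $z=y\,h(y^n)$ on $\Sigma$. At corners with $\alpha\notin\Q$ your argument is correct.

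\textbf{Consequences, and what is missing.} Neither $\omega_P^\circ=c\,\omega_q$ nor your derivation of $\omega_P=c_{PQ}\omega_Q$ goes through.

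The extension step can be repaired. Write $g=\phi(x^my^n)$ with $\phi$ holomorphic and $\phi(0)\neq0$. Then $g\,\omega_q$ is a closed extension of $\omega_P^\circ$ to a full neighborhood of $q$, with simple poles on both branches. The paper gets this by normalization rather than rigidity. It takes the local first integral $F$ whose restriction to the transversal is $z^n$ and uses $\frac1n\,dF/F$. This form restricts to $dz/z$ and therefore agrees with $\omega_P^\circ$ on the overlap.

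The corner relation cannot be obtained your way. If $\omega_P$ and $\omega_Q$ are built from linearizing coordinates chosen independently on $P$ and on $Q$, their quotient near a resonant corner is an arbitrary nonvanishing function $\phi(x^my^n)$. The constant $c_{PQ}$ has to come from choosing the normalizations on the two sides of the corner coherently. One way is to transport the transverse coordinate from $P$ to $Q$ through the form already built near $q$; this is the point where the paper invokes its normalizations on transversals to $P$ and $Q$. No uniqueness principle for closed defining forms is available at such a corner, and your proposal never makes a coherent choice of this kind.
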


\begin{proof}
The ordinary holonomy group of $P^\circ=P\setminus\Sing(\widetilde\Fcal)$ is
finitely generated.  Every one of its pseudo-orbits is contained in a
single class of the transverse leaf relation.  Lemma~\ref{lem:resolution-propagation}
therefore makes the whole ordinary holonomy group $L$-stable, and the group
theorem of \cite{LeonScardua2018} shows that it is either finite cyclic or
analytically linearizable of circle type.  Choose a transversal $\Sigma$ and a coordinate
$z$ in which the whole ordinary holonomy group is linear.  In both cases the
meromorphic differential $dz/z$ is invariant under holonomy.  It therefore
suspends along $P^\circ$ to a closed meromorphic form $\omega_P^\circ$ in a
tubular neighborhood of $P^\circ$, with a simple pole along $P^\circ$.

Let $q\in P\cap\Sing(\widetilde\Fcal)$.  Lemma~\ref{lem:resolution-propagation}
provides a local closed meromorphic defining form $\omega_q$ with simple
poles along the two local invariant branches.  We normalize it so that its
restriction to a nearby transversal to $P$ is $dz/z$.  In the circle-type
case this normalization is unique because a germ commuting with a
nonresonant rotation is linear.  In the finite case, if the holonomy has
order $m$, a primitive local first integral may be chosen so that its
restriction to the transversal is $z^m$; then $(1/m)dF/F$ restricts to
$dz/z$.  Hence $\omega_q$ and $\omega_P^\circ$ agree on the overlap near a
regular point of $P$ close to $q$.  They therefore glue.  Repeating this at
the finitely many punctures extends $\omega_P^\circ$ to the asserted form
$\omega_P$ on a neighborhood of the whole component.

Finally suppose $P$ and $Q$ meet at a corner.  In the linearizing coordinates
of Lemma~\ref{lem:dulac-passage}, both adapted forms are closed logarithmic
forms defining the same linear foliation and having nonzero simple residues
on the two axes.  Their quotient is a holomorphic first integral.  The
normalizations on transversals to $P$ and $Q$ force this quotient to be
constant; equivalently, a direct comparison of the two logarithmic normal
forms gives $\omega_P=c_{PQ}\omega_Q$ with $c_{PQ}\ne0$.
\end{proof}

\begin{remark}
The proof uses the algebraic divisor only to provide the initial stable
transverse side of each reduction tree.  Additional local separatrices are
allowed and are discovered dynamically during the reduction.  
\end{remark}

\section{Proof of Theorem A}\label{sec:proof-projective}

We now assemble the preceding local and transverse ingredients.  The proof is
first carried out on the projective plane, where the adapted forms can be
globalized by the extension argument of Camacho--Lins Neto--Sad, and is then
lifted to higher dimension by restriction to a general plane and projective
extension.

We first treat the case $n=2$.  Put $C=D\subset\PP^2$ and
$\Sigma=C\cap\Sing(\Fcal)$.  Lemma~\ref{lem:projective-holonomy} converts the
geometric saturation condition~\ref{PAstable} into $L$-stability of the
transverse leaf relation, and hence of the ordinary and virtual holonomy,
along every regular component $C_i^\circ$.  Hypothesis~\ref{PAgen} says
that the singularities on $C$ are non-dicritical non-nodal generalized curves.  No
assumption is made on possible additional local separatrices.

Resolve these singularities.  Lemma~\ref{lem:resolution-propagation} shows
that the transverse stability propagates to every exceptional component and
to every additional local separatrix met by the reduced configuration.
Lemma~\ref{lem:component-adapted} then constructs an adapted closed
meromorphic form in a neighborhood of each component, with simple poles, and
shows that the forms attached to two components meeting at a corner differ
by a nonzero constant.  This is exactly the compatibility mechanism needed
in the adapted-form construction of Camacho--Lins Neto--Sad
\cite{CamachoLinsNetoSad1992}.

The projective case differs from the tree situation in an important and
useful way: no acyclicity of the incidence graph is needed.  Let $\eta$ be
a global rational one-form defining $\Fcal$ on $\PP^2$.  If $\omega_i$ is
an adapted closed form near one resolved component, write
\[
 \pi^*\eta=f_i\omega_i .
\]
On an overlap,
\[
 \omega_i=c_{ij}\omega_j,\qquad c_{ij}\in\C^*,
\]
and consequently
\[
 \frac{df_i}{f_i}=\frac{df_j}{f_j}.
\]
Thus the logarithmic differentials glue even when the divisor graph
contains cycles.  The extension argument of
\cite{CamachoLinsNetoSad1992}, followed by descent through
the resolution, yields a closed rational one-form $\Omega$ on $\PP^2$
defining $\Fcal$ and having only simple divisorial poles.  At this stage an
additional local separatrix not contained in $C$ may a priori occur as a
local branch of the polar set.  Since $\Omega$ is now rational on all of
$\PP^2$, such a branch is contained in a global irreducible polar curve.
Every irreducible component of the polar divisor of a closed rational
defining form is invariant by the foliation.  By the maximality assumption
\ref{PAmax}, no such polar component can occur outside $D$.  Conversely the
construction has a simple pole along every component of $D$.  Hence
\[
 \Pol(\Omega)_{\red}=D.
\]
A closed rational one-form on $\PP^2$ with reduced polar divisor $D$ is
logarithmic, so
\[
 \Omega=\sum_{j=1}^r\lambda_j\,\frac{dF_j}{F_j}
\]
for constants $\lambda_j\in\C^*$.  Homogeneity gives the projective residue
relation
\[
 \sum_{j=1}^r\lambda_j\deg F_j=0.
\]

Now let $n\ge3$ and choose a sufficiently general linear plane
$E\simeq\PP^2$.  By Lemma~\ref{lem:plane-projective-stability}, the curve
$D_E=D\cap E$ is projectively $L$-stable for the induced foliation
$\Fcal_E=\Fcal|_E$.  Hypothesis~\ref{PAgen} gives the required
non-dicritical non-nodal generalized-curve singularities on $D_E$.  Applying the
preceding two-dimensional construction on the plane gives a closed
meromorphic one-form defining $\Fcal_E$ near $D_E$, with simple poles.  No
separatrix-completeness condition on $D_E$ is used.

We then invoke the projective extension theorem of
Camacho--Lins Neto--Sad \cite{CamachoLinsNetoSad1992}, namely the extension
result used there for adapted meromorphic forms from a sufficiently general
plane section to the ambient projective foliation.  This is precisely the
extension step needed here.  Applied to the form obtained above on $E$, it gives a closed meromorphic defining form in
an ambient neighborhood of the section; the remaining extension across the
singular set is the standard Levi--Hartogs continuation used in that
argument.  Consequently one obtains a global closed rational one-form
$\Omega$ on $\PP^n$ defining $\Fcal$.  As in the plane case, every polar hypersurface is
invariant; maximality of $D$ therefore implies
\[
 \Pol(\Omega)_{\red}=D.
\]
Since the poles are simple, $\Omega$ is logarithmic.  Writing
$D_j=\{F_j=0\}$ gives
\[
 \Omega=\sum_{j=1}^r\lambda_j\,\frac{dF_j}{F_j},
 \qquad
 \sum_{j=1}^r\lambda_j\deg F_j=0.
\]
This proves Theorem~\ref{thm:A}.

\section{Topological rigidity and residue consequences}\label{sec:proof-rigidity}

We now prove Theorem~\ref{thm:B}.
The proof has two independent ingredients.  First, the ambient conjugacy
carries the polar divisor to an algebraic normal-crossings invariant divisor.
Second, the theorem of Camacho--Lins Neto--Sad on topological invariance and
equidesingularization of generalized-curve singularities (\cite{CamachoLinsNetoSad1984}) supplies the local
hypotheses required by Licanic's global logarithmicity criterion (\cite{Licanic2000}).

\begin{lemma}[Preservation of logarithmic crossing singularities]\label{lem:C-sing}
Let $p\in D\cap\Sing(\Gcal)$ and put $q=h(p)$.  Then
\[
 q\in\Sing(\Fcal).
\]
\end{lemma}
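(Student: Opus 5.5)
We argue by contradiction, using only topological information about the foliation near $p$. We may assume $p$ is a crossing point of $D$; if it is not, we reduce to this case as described at the end. Suppose $q=h(p)$ is a regular point of $\Fcal$. Then there is a small neighborhood $U'$ of $q$ which is a flow box for $\Fcal$. Its leaves are the plaques of a product $\mathbb D\times\mathbb D$, and every local invariant set through $q$ is locally a single disc. Because $h$ is an ambient homeomorphism carrying leaves of $\Gcal$ to leaves of $\Fcal$, the preimage $U=h^{-1}(U')$ is a neighborhood of $p$. In $U$ the foliation $\Gcal$ would be topologically a product foliation. In particular, through $p$ there would pass exactly one local leaf, topologically an embedded disc, and $p$ would have a neighborhood basis of sets in which the plaque through $p$ separates nothing.

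Now we use the structure of $\Gcal$ at $p$. Since $D$ has normal crossings and $p\in\Sing(\Gcal)$, in suitable coordinates $D=(xy=0)$ near $p$ and $\Omega_0=\lambda_i\,dx/x+\lambda_j\,dy/y+dh$ with $h$ holomorphic. After the coordinate change used in Proposition~\ref{prop:strong-local-log}, this becomes the linear model $\lambda_i\,dx/x+\lambda_j\,dy/y=0$. Both axes are $\Gcal$-invariant, so $D\cap U$ near $p$ is the union of two invariant curves $\{x=0\}$ and $\{y=0\}$ which meet only at $p$.

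We then transport this through $h$. The image $h(D\cap U)$ is a closed $\Fcal$-invariant set in $U'$, namely a union of leaves of the flow box together with their closures. It contains $q$, and it is the union of two closed subsets $h(\{x=0\})$ and $h(\{y=0\})$. Each of these is a topological disc, invariant, and passes through $q$. In a product foliation, an invariant topological disc through $q$ that is closed in $U'$ must be the plaque through $q$. Hence both images coincide near $q$. This contradicts injectivity of $h$, since the two axes intersect only at $p$.

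The step we expect to be the main obstacle is justifying that $h(\{x=0\})$ is a union of plaques. The issue is that $\{x=0\}\setminus\{p\}$ is a punctured disc lying in a leaf of $\Gcal$, while $p$ itself is a singular point. We will handle this as follows. The set $h(\{x=0\}\setminus\{p\})$ lies in leaves of $\Fcal$, so it is contained in a union of plaques. It is connected, so within $U'$ it lies in a single leaf, and a neighborhood of it in that leaf is an open subset of plaques. Its closure contains $q$, so it lies in the plaque $P_q$ through $q$. The same argument applies to $h(\{y=0\}\setminus\{p\})$. Both are then open subsets of $P_q\setminus\{q\}$ accumulating at $q$. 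In fact they are each a full punctured neighborhood of $q$ in $P_q$, by invariance of domain applied to the two-real-dimensional surfaces. Therefore they intersect, which gives the contradiction.

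Finally, if $p$ is a singular point of $\Gcal$ on a smooth point of $D$, we use the fact that the crossings are non-dicritical. In that case $D$ being normal crossings and logarithmic forces a second local separatrix through $p$. We then apply the same two-separatrix argument to this pair of separatrices in place of the two axes.
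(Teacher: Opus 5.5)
Your argument at a crossing point is sound and matches the paper's: in a flow box at $q$, the images of the two punctured local branches are connected and accumulate at $q$, so both lie in the plaque $P_q$. That is incompatible with their coming from two invariant branches that meet only at $p$. Your version is more careful than the paper's one-line ``impossible in a flow box''. The connectedness argument places each image in $P_q$. Invariance of domain then shows that $h(\{x=0,\ |y|<\varepsilon\})$ and $h(\{y=0,\ |x|<\varepsilon\})$ are both open neighbourhoods of $q$ in $P_q$, so they meet away from $q$, contradicting injectivity. Two small points. Work with these small discs rather than with $D\cap h^{-1}(U')$, which need not be a disc, so the claim that its image ``must be the plaque'' is not justified as stated. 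Also rename your holomorphic function, since $h$ already denotes the conjugacy.

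The gap is the reduction to crossing points in your last paragraph. You assert that at a singular point of $\Gcal$ on a smooth point of $D$ the logarithmic structure ``forces a second local separatrix'', citing non-dicriticality of the crossings. That hypothesis concerns other points and says nothing about such a $p$, and no argument for the second separatrix is given.

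The missing observation, which is how the paper opens its proof, is that this case is empty. Suppose only one smooth component $D_j=\{x=0\}$ passes through $p$. Then $\Omega_0=\lambda_j\,dx/x+\eta$ with $\eta$ holomorphic, so $x\Omega_0=\lambda_j\,dx+x\eta$ is a holomorphic form defining $\Gcal$ near $p$. Its value along $\{x=0\}$ is $\lambda_j\,dx\neq0$, because $\lambda_j\neq0$. Hence $p$ would be a regular point of $\Gcal$. So every point of $D\cap\Sing(\Gcal)$ is a crossing of two local branches of $D$, which is exactly where your flow-box argument applies. With this replacement the proof is complete.
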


\begin{proof}
Since $D$ is reduced normal crossings and all residues $\lambda_j$ are
nonzero, a point belonging to only one smooth component of $D$ is regular for
$\Gcal$.  Indeed, if locally $D_j=\{x=0\}$, then
\[
 \Omega_0=\lambda_j\frac{dx}{x}+\eta,
\]
with $\eta$ holomorphic, and multiplication by $x$ gives a holomorphic
defining form whose value along $x=0$ is $\lambda_j\,dx\neq0$.
Consequently every point of $D\cap\Sing(\Gcal)$ is a crossing of two
components.  In suitable local coordinates centered at $p$,
\(
 D=\{xy=0\},
\)
and the two branches are distinct separatrix leaves through $p$.

Assume by contradiction that $q=h(p)$ is a regular point of $\Fcal$.  In a
flow box centered at $q$, exactly one punctured plaque has $q$ in its
closure.  The two distinct source separatrix leaves through $p$ are sent by
$h$ to two distinct leaves of $\Fcal$ whose closures contain $q$, which is
impossible in a flow box.  Hence $q$ is singular.
\end{proof}

\begin{proof}[Proof of Theorem~\ref{thm:B}]
For each irreducible component $D_j\subset D$ let
\[
 D_j^\circ=D_j\setminus\Sing(\Gcal),
 \qquad L_j'=h(D_j^\circ),
 \qquad D_j'=h(D_j).
\]
Since $D_j^\circ$ is a leaf of $\Gcal$, $L_j'$ is a leaf of $\Fcal$.
Since $D_j$ is compact and $D_j^\circ$ is dense in $D_j$,
\[
D_j'=\overline{L_j'}.
\]
Away from the finitely many images of the singular points of $\Gcal$ on
$D_j$, the set $L_j'$ is an embedded complex analytic curve.  By the
Remmert--Stein extension theorem (\cite{Chirka1989,RemmertStein1953}) its closure across those isolated points is
analytic.  Hence every $D_j'$ is an analytic invariant curve.  Chow's theorem (\cite{GriffithsHarris1978})
then implies that it is algebraic, and
\[
 D'=D_1'\cup\cdots\cup D_r'
\]
is a reduced algebraic invariant divisor.

At a crossing $p\in D_i\cap D_j$, the germ $(D_i\cup D_j,p)$ consists of
two smooth branches with intersection multiplicity one.  The ambient
homeomorphism gives an embedded topological equivalence with the image germ.
For reduced plane curve germs the number of branches, the topological type of
each branch and the pairwise intersection multiplicities are embedded
topological invariants; see, for instance, \cite{Wall2004}.  Therefore the
image branches are smooth and transverse.  Thus $D'$ has normal crossings.

Fix now $p\in D\cap\Sing(\Gcal)$ and put $q=h(p)$.
By Lemma~\ref{lem:C-sing}, $q\in\Sing(\Fcal)$.  In local coordinates the
source germ is represented by
\(
 \lambda_i\frac{dx}{x}+\lambda_j\frac{dy}{y}=0,
\)
or equivalently by
\(
 x\,dy-\mu y\,dx=0,
 \qquad
 \mu=-\frac{\lambda_i}{\lambda_j}.
\)
The hypothesis
$\lambda_i/\lambda_j\notin\Q_{<0}$ is exactly the condition
$\mu\notin\Q_{>0}$.  Hence the source germ is a non-dicritical reduced
singularity, therefore a generalized-curve germ, and its complete separatrix
set consists of the two coordinate axes.

By the equidesingularization theorem of Camacho--Lins Neto--Sad
\cite{CamachoLinsNetoSad1984}, a germ topologically equivalent to a
generalized curve is again a generalized curve and has an isomorphic
resolution.  In particular the target germ $(\Fcal,q)$ has exactly two
convergent separatrix branches.  The two local branches of $D'$ through $q$
are already distinct invariant analytic curves, so they exhaust the complete
local separatrix set of $(\Fcal,q)$.

We can now apply Licanic's logarithmicity theorem
\cite[Theorem~A]{Licanic2000}.  On $\PP^2$ the Picard group is isomorphic to
$\mathbb Z$; every local separatrix of $\Fcal$ through a singular point of
$D'$ is a branch of $D'$ by the preceding paragraph; and every such
singularity is a generalized curve.  All hypotheses of Licanic's theorem are
therefore satisfied, and $\Fcal$ is logarithmic on the whole projective
plane.
\end{proof}

\begin{proof}[Proof of Corollary~\ref{cor:surface-rigidity}]
The proof is the same.  The image $D'=h(D)$ is a compact invariant analytic
normal-crossings divisor; since $X$ is algebraic it is algebraic.  At each
singular point of $D$, the source germ is a generalized curve with complete
separatrix set equal to the local branches of $D$.  The theorem of
Camacho--Lins Neto--Sad transfers the generalized-curve property and the
number of separatrix branches to the target germ, so the local branches of
$D'$ form its complete separatrix set.  Since
$\operatorname{Pic}(X)\simeq\mathbb Z$, Licanic's theorem applies and yields
that $\Fcal$ is logarithmic on $X$.
\end{proof}

\begin{proof}[Proof of Corollary~\ref{cor:residue-rigidity}]
If
\(
 \frac{\lambda_i}{\lambda_j}\in\R\setminus\Q
\)
whenever $D_i\cap D_j\neq\varnothing$, then in particular
$\lambda_i/\lambda_j\notin\Q_{<0}$.  Thus every crossing is non-dicritical,
and Theorem~\ref{thm:B} applies directly.
\end{proof}

\begin{remark}\label{rem:irrational-sufficient}
The irrationality assumption in Corollary~\ref{cor:residue-rigidity} is a
sufficient condition, not a necessary one.  What must be excluded at a
crossing is the dicritical case, corresponding to 
\(
 \lambda_i/\lambda_j\in\Q_{<0}.
\)
\end{remark}

The following elementary example shows that the residue condition in
Corollary~\ref{cor:residue-rigidity} is nonempty and occurs naturally already
for the simplest normal-crossings divisor in $\PP^2$, namely the coordinate
triangle.

\begin{example}
For the coordinate triangle in $\PP^2$, take
\(
 \Omega=\frac{dZ_0}{Z_0}+\sqrt2\,\frac{dZ_1}{Z_1}
 -(1+\sqrt2)\frac{dZ_2}{Z_2}.
\)
The projective residue relation is satisfied, every pairwise residue ratio is
real and irrational, and the polar divisor is a normal-crossings curve.
Corollary~\ref{cor:residue-rigidity} therefore applies to every foliation
ambiently topologically conjugate to this logarithmic model.
\end{example}

\end{document}